\newcommand{\F}{\mathbb{F}}
\newcommand{\Z}{\ensuremath{\mathbb{Z}}}
\newcommand{\cor}[1]{\mathcal{#1}}
\newcommand{\graffe}[1]{\{#1\}}
\DeclareMathOperator{\Syl}{Syl} 
\DeclareMathOperator{\Aut}{Aut} 
\DeclareMathOperator{\hc}{H} 
\DeclareMathOperator{\Res}{Res} 
\DeclareMathOperator{\ord}{ord} 
\DeclareMathOperator{\nor}{N} 
\DeclareMathOperator{\Heis}{Heis} 
\DeclareMathOperator{\Br}{Br}
\DeclareMathOperator{\Sym}{Sym}
\DeclareMathOperator{\Int}{Int}
\newenvironment{sistema}
{\left\lbrace\begin{array}{@{}l@{}}}
{\end{array}\right.}
\newtheorem{definition}{Definition}
\newtheorem{lemma}[definition]{Lemma}
\newtheorem{theorem}[definition]{Theorem}
\newtheorem{proposition}[definition]{Proposition}
\newtheorem{theoremA}{Theorem}
\newtheorem{propositionA}[theoremA]{Proposition}
\newcounter{ex}[section]
\newenvironment{ex}[1][]{\refstepcounter{ex}\par\medskip\noindent \textbf{Example.} \rmfamily}{\medskip}
\begin{document}

%
%
%
%
%
%
%
%
%

\title[Evolving groups]
 {Evolving groups}

\author{Mima Stanojkovski}

\address{%
Fakult\"{a}t f\"{u}r Mathematik \\
Universit\"{a}t Bielefeld \\
Postfach 100131 \\
D-33501 Bielefeld \\
Germany }

\email{mstanojk@math.uni-bielefeld.de}

\thanks{This paper collects the results from the author's master thesis \cite{evolving}, which she wrote at Leiden University under the supervision of Prof. Hendrik Lenstra. }

\subjclass{12G05, 20D20, 20E34, 20F16}

\keywords{Cohomology; Tate groups; finite groups; evolving groups; intense automorphisms}


\begin{abstract}
The class of evolving groups is defined and investigated, as well as their connections to examples in the field of Galois cohomology. Evolving groups are proved to be Sylow Tower groups in a rather strong sense. In addition, evolving groups are characterized as semidirect products of two nilpotent groups of coprime orders where the action of one on the other is via automorphisms that map each subgroup to a conjugate.  
\end{abstract}

\maketitle

\section{Introduction}

\noindent
Let $G$ be a group. We say that $G$ is \emph{evolving} if it is finite and if, for every prime number $p$ and for every $p$-subgroup $I$ of $G$, there exists a subgroup $J$ of 
$G$ that contains $I$ and such that $|G:J|$ is a $p$-power and $|J:I|$ is coprime to $p$.
In Section \ref{section 1} we show that normal subgroups and quotients of evolving groups are evolving, while arbitrary subgroups are not necessarily evolving. In Section \ref{section T1} we prove the following result involving Tate cohomology groups. 

\begin{theoremA}\label{T1}\label{starter}
Let $G$ be a finite group. Then the following are equivalent.
\begin{itemize}
 \item[$i$.] For every $G$-module $M$, for every integer $q$, and for every $c\in\widehat{\hc}{^{q}}(G,M)$, the minimum of the set
           $\{|G:H| : H\leq G \ \ \text{with} \ \ c\in\ker\Res_H^G\}$ coincides with its greatest common divisor.
 \item[$ii$.] For every $G$-module $M$ and for every element $c\in\widehat{\hc}{^{0}}(G,M)$, the minimum of the set
           $\{|G:H| : H\leq G \ \ \text{with} \ \ c\in\ker\Res_H^G\}$ coincides with its greatest common divisor.
 \item[$iii$.] The group $G$ is evolving.         
\end{itemize}
\end{theoremA}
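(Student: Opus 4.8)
The plan is to prove the cycle of implications $i \Rightarrow ii \Rightarrow iii \Rightarrow i$. The implication $i \Rightarrow ii$ is trivial, since $ii$ is just the special case $q = 0$ of $i$.

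For $ii \Rightarrow iii$: I want to show that failure of the evolving property produces a $G$-module $M$ and a degree-$0$ Tate cohomology class $c$ whose "restriction-kernel index set" has min strictly smaller than gcd — equivalently, has two coprime-ish elements whose gcd is not attained. Concretely, suppose $G$ is not evolving: there is a prime $p$ and a $p$-subgroup $I$ such that no $J \ge I$ has $|G:J|$ a $p$-power and $|J:I|$ prime to $p$. I would try to detect this with a carefully chosen permutation module or a coinduced module. Recall that for the module $M = \mathrm{Ind}_H^G \mathbb{Z}$ (or a suitable variant), there are natural classes whose restriction to a subgroup $K$ vanishes exactly when the double coset combinatorics of $H\backslash G / K$ is favorable — roughly, $\Res_K^G$ of the fundamental class vanishes when $K$ is contained in a conjugate of $H$, or more precisely when $|G:K|$ divides something. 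Since $\widehat{H}^0(G,M) = M^G / N_G M$ is concrete, I would look for $M$ and $c$ so that $c \in \ker \Res_K^G$ iff $K$ contains a conjugate of some fixed subgroup, then use the $p$-subgroup $I$ witnessing non-evolvingness to force the index set to contain a $p$-power $|G:J_0|$ (for some ``best possible'' $J_0 \supseteq I$) together with a $p'$-number, while no element realizing the gcd exists.

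For $iii \Rightarrow i$: assume $G$ is evolving and fix $M$, $q$, and $c \in \widehat{H}^q(G,M)$. The standard tool is that $\Res_P^G$ is injective on the $p$-primary component of $\widehat{H}^q(G,M)$ for $P \in \mathrm{Syl}_p(G)$, and more generally that $c$ dies on a subgroup $H$ as soon as $H$ contains a Sylow $p$-subgroup for every $p$ dividing the order of $c$ — or rather, for each prime $p$, $c \in \ker\Res_H^G$ on the $p$-part as soon as $|G:H|$ is prime to $p$. Writing $n = \ord(c)$ and factoring $n = \prod p_i^{a_i}$, I want to produce, for each $i$, a subgroup $H_i$ with $|G:H_i|$ a $p_i'$-number such that $c$ restricted to $H_i$ kills the $p_i$-part; then intersecting suitably (or rather, showing the index set is closed enough under the relevant operations) forces the gcd of the index set to divide $n$, while each individual index divides $n$ too, and evolvingness is exactly what lets us split a subgroup $I$ into the tower needed so that the min of indices equals this common value. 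The precise bookkeeping is: show the set $S = \{|G:H| : c \in \ker\Res_H^G\}$ has the property that its gcd $d$ is itself a member, i.e. there is a single $H$ with $|G:H| = d$; the evolving hypothesis, applied one prime at a time to assemble $H$ from its Sylow-wise pieces, is what makes this possible.

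The main obstacle I anticipate is the direction $ii \Rightarrow iii$: turning a purely group-theoretic failure (no good $J$ for some $p$-subgroup $I$) into an \emph{explicit} cohomology class in degree $0$ whose kernel-index set witnesses min $\ne$ gcd. This requires choosing the module $M$ cleverly — likely built from permutation modules on cosets of subgroups related to $I$ and to Sylow subgroups — and then computing $\widehat{H}^0 = M^G/N_G M$ and the restriction maps concretely enough to read off which indices $|G:H|$ occur. I expect the earlier sections' structural results on evolving groups (Sylow tower structure, the semidirect-product characterization) are not needed for this implication but that the contrapositive analysis of ``what can go wrong'' mirrors the failure of the Sylow tower property, which should guide the construction of $M$ and $c$.
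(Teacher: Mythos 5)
Your cycle $i \Rightarrow ii \Rightarrow iii \Rightarrow i$ and your tools (permutation modules and concrete computation of $\widehat{\hc}^0$ for $ii\Rightarrow iii$; $p$-primary decomposition of $c$ and Sylow restriction for $iii\Rightarrow i$) match the paper's proof strategy exactly, and your sketch of $iii\Rightarrow i$ is essentially the paper's Lemma \ref{prime order starter} plus Proposition \ref{starter:3to1}: write $c=\sum_p c_p$, handle each $p$-primary part separately, and intersect the resulting subgroups $J_p$. One small slip there: you say you want $|G:H_i|$ to be a $p_i'$-number such that $c$ restricted to $H_i$ kills the $p_i$-part, but if $|G:H_i|$ were coprime to $p_i$ then $H_i$ would contain a Sylow $p_i$-subgroup and restriction would be injective on that primary component; what you actually want (and what evolvingness delivers) is $|G:H_i|$ a $p_i$-\emph{power}, obtained by taking a $p_i$-evolution of a maximal $p_i$-subgroup on which $c_{p_i}$ restricts to zero.

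The genuine gap is in $ii\Rightarrow iii$, which you yourself flag as the obstacle: you correctly anticipate that a permutation module should encode conjugacy/containment conditions, but you do not produce the module or the class, and that construction is the entire content of the hard direction. The paper's choice (Lemma \ref{cocycle}) is: fix the prime $p$, the $p$-subgroup $I$ of order $p^\alpha$, a Sylow $p$-subgroup $S$, and let $\cor{L}$ be the set of subgroups of $G$ of order $p^\alpha$ that are \emph{not} $G$-conjugate to $I$. Take $M=\Z^{G/S}\oplus\bigoplus_{L\in\cor{L}}\Z^{G/L}$ and, after identifying $\widehat{\hc}^0(G,M)$ with $\Z/|S|\Z\oplus\bigoplus_{L\in\cor{L}}\Z/|L|\Z$ via Proposition \ref{tate permutation}, set $c=\big(p^\alpha,(p^{\alpha-1})_{L\in\cor{L}}\big)$. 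The $\Z^{G/S}$ component forces $\ord_p|H|\leq\alpha$ for any $H$ killing $c$, and the $\Z^{G/L}$ components forbid $H$ from containing a conjugate of any $L\in\cor{L}$ at the critical order $p^\alpha$; so a subgroup $K$ realizing the gcd $p^{\ord_p|G|-\alpha}$ must contain a $G$-conjugate of $I$, and conjugating $K$ back yields a $p$-evolution of $I$. Without this specific design of $M$ and $c$, your contrapositive plan has nothing concrete to drive a contradiction, so the proposal as written does not yet close the implication $ii\Rightarrow iii$.
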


\noindent
Condition ($i$) from Theorem \ref{T1} is inspired, via Galois cohomology, by the following examples (for more information concerning the connection to cohomology groups see e.g. \cite[Ch.~$\mathrm{V}.14.2$, $\mathrm{VIII}$, $\mathrm{IX}$]{berhuy}).

\begin{ex}
Let $k$ be a field and let $n$ be a positive integer.
Moreover, let $a$ be a non-zero element of $k$. Then the least degree of the irreducible factors of $x^n-a$ over $k$ divides all other degrees. 
\end{ex} 

\begin{ex} Let $k$ be a field and let $\Br(k)$ denote the Brauer group of $k$, i.e. the group of similarity classes of central simple algebras over $k$, endowed with the multiplication $\otimes_k$.
If $[A]\in\Br(k)$, then an extension $\ell/k$ is said to \emph{split} $A$ if $[A\otimes_k\ell]=[\ell]$ and the minimal degree of finite separable extensions of $k$ that split $A$ over $k$ divides all other degrees (see for example \cite[Ch.~$4.5$]{algebras}). 
\end{ex}

\begin{ex} Let $k$ be a field and let $C$ be a smooth projective absolutely irreducible curve of genus $1$ over $k$. As a consequence of the Riemann-Roch theorem, as explained for example in \cite[\S $2$]{langtate}, the least degree of the finite extensions of $k$ for which $C$ has a rational point divides all other degrees. 
\end{ex}

\noindent
In the course of our investigation of evolving groups we come upon a peculiar family of Sylow Tower groups: we say that a finite group $G$ is \emph{prime-intense} if it possesses a collection $(S_p)_{p| |G|}\in\prod_{p| |G|}\Syl_p(G)$ of Sylow subgroups with the property that, for all primes $p>q$ dividing $|G|$, one has $S_q\subseteq\nor_G(S_p)$ and, for each subgroup $H$ of $S_p$ and each $x\in S_q$, the subgroups $xHx^{-1}$ and $H$ are $S_p$-conjugate. In Section \ref{section primeintense} we prove the next two results.

\begin{theoremA}\label{behaviour}
A finite group is evolving if and only if it is prime-intense.
\end{theoremA}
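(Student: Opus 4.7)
The plan is to prove both implications by induction on $|G|$, using in each direction the normality of the Sylow subgroup $S$ corresponding to the largest prime $p_0$ dividing $|G|$. For a prime-intense group this normality is immediate: every $S_q$ with $q<p_0$ lies in $\nor_G(S)$, and together with $S$ these generate $G$. For an evolving group the normality of $S$ is the ``Sylow tower'' property advertised in the introduction; one either takes this as a prior result of the paper or establishes it directly by applying the evolving condition to suitable $p_0$-subgroups.

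For the direction $(\Leftarrow)$, let $G$ be prime-intense with Sylow system $(S_r)_r$ and $I$ a $p$-subgroup of $G$; the required $J$ amounts to a Hall $p'$-subgroup of $G$ normalizing $I$. If $p=p_0$, after $G$-conjugating assume $I\leq S$; writing $G=S\cdot H$ with $H=\langle S_q:q<p_0\rangle$ a Hall $p'$-subgroup, one applies intensity to the generators of $H$ to see that the $G$-conjugacy class of $I$ coincides with its $S$-conjugacy class. Hence $|G:\nor_G(I)|$ is a $p$-power, so $\nor_G(I)$ contains a Hall $p'$-subgroup $L$, and $J:=IL$ works. If $p<p_0$, project to $\bar G=G/S$ (which inherits prime-intensity from its induced Sylow system), apply the inductive hypothesis to obtain $\bar J\supseteq\bar I$, and set $J:=\pi^{-1}(\bar J)$; a direct order computation confirms the size conditions.

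For the direction $(\Rightarrow)$, assume $G$ evolving with $S$ normal. The quotient $\bar G=G/S$ is evolving by Section~\ref{section 1}, so by induction prime-intense; its Sylow system $(\bar S_q)_{q<p_0}$ lifts via Schur--Zassenhaus to a Sylow system $(S_q)_{q<p_0}$ in $G$, which I claim completes to a prime-intense Sylow system together with $S$. The normalizer conditions $S_q\leq\nor_G(S_r)$ and the intensity conditions for pairs with $r<p_0$ transfer directly from $\bar G$, while $S_q\leq\nor_G(S)=G$ is automatic. It remains to show, for $I\leq S$ and $x\in S_q$ with $q<p_0$, that $xIx^{-1}$ is $S$-conjugate to $I$. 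Apply the evolving condition to $I$ to obtain $J\supseteq I$ with $|G:J|$ a $p_0$-power and $|J:I|$ coprime to $p_0$. Since $S\triangleleft G$, the intersection $J\cap S$ is normal in $J$; being a $p_0$-subgroup of $J$ containing the Sylow $p_0$-subgroup $I$, it must equal $I$, so $I\triangleleft J$. Schur--Zassenhaus then supplies a Hall $p_0'$-complement $K$ in $J$ with $|K|=|G|_{p_0'}$, hence $G$-conjugate to $H:=\langle S_q:q<p_0\rangle$; writing the conjugator in the decomposition $G=SH$ gives $K=sHs^{-1}$ for some $s\in S$. Thus $H$ normalizes $I':=s^{-1}Is\leq S$, and a short commutator computation using $[S,\nor_G(S)]\subseteq S$ concludes that every $x\in H$ lies in $S\cdot\nor_G(I)$, which is exactly the required intensity condition.

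The main obstacle is the last step of $(\Rightarrow)$: the evolving condition supplies a subgroup $J$ only abstractly, and one must extract from it enough structure to recover the intensity of the Sylow system. The crucial structural point is the identification $I=J\cap S$ forced by the normality of $S$, since this is what legitimizes Schur--Zassenhaus to produce the Hall complement $K$ and ultimately transports the abstract $J$-level information to the desired $S$-conjugacy statement.
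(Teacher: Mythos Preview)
Your argument is correct and follows the same route as the paper (Propositions~\ref{behaviour:2to1} and~\ref{ev to corr}, with Lemma~\ref{sylows} for the Sylow-tower step): in particular, your key identification $I=J\cap S$ followed by Schur--Zassenhaus and conjugacy of Hall complements is exactly the paper's mechanism in Proposition~\ref{ev to corr}. One small remark: the paper's proof that the largest Sylow is normal (Lemma~\ref{sylows}) proceeds via the \emph{smallest} prime (producing a normal subgroup of prime index and inducting), not via $p_0$-subgroups as your aside suggests; and for $(\Leftarrow)$ the paper dispenses with the case split and induction by using the decomposition $G=L_p^{\cor{S}}\rtimes S_p\rtimes T_p^{\cor{S}}$ together with Lemma~\ref{equivalent intense coprime} directly for every prime $p$, but your inductive variant is equally valid.
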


\begin{propositionA}\label{supersolvable}
Evolving groups are supersolvable.
\end{propositionA}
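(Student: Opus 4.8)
The plan is to argue by induction on $|G|$, invoking Theorem~\ref{behaviour} in order to work with the prime-intense description and using that quotients of evolving groups are evolving (Section~\ref{section 1}). So let $G$ be prime-intense with prime-intense Sylow collection $(S_r)_{r\mid|G|}$, let $p$ be the largest prime dividing $|G|$, and put $P=S_p$. First I would observe that $P\trianglelefteq G$: by the prime-intense hypothesis every $S_r$ with $r<p$ normalizes $P$, while $\langle S_r : r\mid|G|\rangle=G$ since this subgroup contains a full Sylow subgroup for every prime dividing $|G|$; hence $\nor_G(P)=G$. Therefore $G/P$ is an evolving group of smaller order, supersolvable by the inductive hypothesis, and it suffices to produce a chain $1=P_0\trianglelefteq P_1\trianglelefteq\dots\trianglelefteq P_l=P$ of subgroups normal in $G$ with each $P_{i+1}/P_i$ cyclic of order $p$: splicing such a chain below the preimage in $G$ of a chief series of $G/P$ then gives a normal series of $G$ with cyclic factors.

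To construct this chain I would take any chief series $1=P_0\trianglelefteq\dots\trianglelefteq P_l=P$ of $G$ refining $1\trianglelefteq P$, so that $V_i:=P_{i+1}/P_i$ is a minimal normal subgroup of $G/P_i$. Being contained in the normal $p$-subgroup $P/P_i$, the factor $V_i$ is elementary abelian, i.e.\ an $\F_p$-vector space on which $G/P_i$ acts by conjugation; and since $P/P_i$ is nilpotent with $V_i$ a nontrivial normal subgroup, the subgroup $[P/P_i,V_i]$ is a proper subgroup of $V_i$ that is normal in $G/P_i$, so minimality forces $[P/P_i,V_i]=1$. Thus $P$ acts trivially on $V_i$, and $V_i$ is an irreducible $\F_p[G/P]$-module; everything reduces to showing $\dim_{\F_p}V_i=1$.

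This is the step I expect to be the crux, and it is where the prime-intense condition does the real work. Fix a prime $q<p$, an element $x\in S_q$, and a one-dimensional subspace $L$ of $V_i$, and let $W\leq P_{i+1}$ be the preimage of $L$ under $P_{i+1}\to V_i$; then $W$ is a subgroup of $S_p=P$, so the prime-intense hypothesis provides $g\in P$ with $xWx^{-1}=gWg^{-1}$. Pushing this equality forward along $P_{i+1}\to V_i$ — which intertwines conjugation on $P_{i+1}$ with the action on $V_i$, since $P_i$ and $P_{i+1}$ are normal in $G$ — and using that $g\in P$ acts trivially on $V_i$, one deduces that $x$ fixes the line $L$. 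As $L$ was arbitrary, every element of every $S_q$ with $q<p$ stabilizes every line of $V_i$; since the images of these $S_q$ generate $G/P$, the group $G/P$ stabilizes every line of $V_i$, and a group that stabilizes every line of a vector space acts on it by scalars. Then every subspace of $V_i$ is $G$-invariant, so irreducibility forces $\dim_{\F_p}V_i=1$, i.e.\ $V_i$ is cyclic of order $p$, which completes the induction. The two points requiring care are the reduction ``$P$ acts trivially on each chief factor inside $P$'' and the passage from the subgroup-to-conjugate condition on the concrete subgroups $W$ to the scalar action on $V_i$; the rest is routine.
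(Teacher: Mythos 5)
Your proof is correct, and it takes a noticeably different route from the paper's. Both proofs begin the same way: by Theorem~\ref{behaviour} one may assume $G$ is prime-intense, the largest prime $p$ gives a normal Sylow subgroup $P=S_p$, and the quotient $G/P$ (respectively its complement $T_p^{\mathcal{S}}$ in the paper) is supersolvable by induction. The divergence is in how one produces a $G$-invariant series with cyclic factors inside $P$. The paper takes any normal series $1=I_0\leq\dots\leq I_r=P$ of the $p$-group $P$ with steps of size $p$ and shows directly that every $I_j$ is normal in $G$: since $I_j$ is normal in $P$, the only $P$-conjugate of $I_j$ is $I_j$ itself, so the prime-intense condition (formally routed through Lemma~\ref{equivalent intense coprime}) forces $T_p^{\mathcal{S}}$ to normalize $I_j$ on the nose. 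You instead refine $1\trianglelefteq P\trianglelefteq G$ to a chief series of $G$, argue that $P$ acts trivially on each chief factor $V_i$ inside $P$ by nilpotence plus minimality, and then use the prime-intense condition to show that each $S_q$ ($q<p$) stabilizes every line of $V_i$, pushing the conjugacy $xWx^{-1}=gWg^{-1}$ through the quotient $P_{i+1}\to V_i$ and killing the $g$ because it acts trivially; hence $G/P$ acts by scalars and irreducibility forces $\dim V_i=1$. The paper's argument is more economical — once one notices that $I_j\trianglelefteq P$ collapses the $P$-conjugacy class, normality in $G$ is immediate and no module-theoretic machinery is needed. Your version is a little longer and detours through irreducible $\F_p[G/P]$-modules, but it has the virtue of not invoking Glauberman/Lemma~\ref{equivalent intense coprime} at all, and the ``$G/P$ acts by scalars on every chief factor inside $P$'' formulation is a clean structural byproduct in its own right. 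Both are sound proofs of the proposition.
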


\noindent
Given a group $G$, we say that an automorphism of $G$ is \emph{intense} if it maps each subgroup of $G$ to a conjugate and we denote by $\Int(G)$ the collection of such automorphisms. We prove some basic facts about intense automorphisms in Section $4$ and,
in Section \ref{sec-structure}, we prove the following.

\begin{theoremA}\label{evolvingintense}
Let $G$ be a finite group. Then the following are equivalent.
\begin{itemize}
 \item[$i.$] The group $G$ is evolving.
 \item[$ii.$] There exist nilpotent groups $N$ and $T$ of coprime orders and a group homomorphism $\phi:T\rightarrow\Int(N)$ such that $G=N\rtimes_{\phi}T$.
\end{itemize} 
\end{theoremA}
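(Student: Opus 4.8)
The plan is to establish the two implications separately: $(ii)\Rightarrow(i)$ by a direct check of the evolving property, and $(i)\Rightarrow(ii)$ by induction on $|G|$.

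For $(ii)\Rightarrow(i)$, write $N=\prod_p P_p$ and $T=\prod_q Q_q$ for the internal direct products of their Sylow subgroups; since $\pi(N)\cap\pi(T)=\emptyset$, each $P_p$ is characteristic in $N$, so $P_p\trianglelefteq G$ and $P_p$ is the unique Sylow $p$-subgroup of $G$. Let $I$ be a $p$-subgroup of $G$. If $p\in\pi(T)$, then after replacing $I$ by a conjugate we may assume $I\leq Q_p$; the set $J=N\cdot\bigl(I\prod_{q\neq p}Q_q\bigr)$ is a subgroup of $G$ (as $N\trianglelefteq G$ and $I\prod_{q\neq p}Q_q$ is a subgroup of the nilpotent group $T$), and $|G:J|=|Q_p:I|$ is a power of $p$ while $|J:I|=|N|\cdot|T:Q_p|$ is coprime to $p$. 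If $p\in\pi(N)$, then $I\leq P_p$, and the point is that $I$ has a $P_p$-conjugate $I'$ that is normalised by $T$: granting this, $J=\bigl(I'\prod_{p'\neq p}P_{p'}\bigr)\rtimes T$ works, and conjugating back produces a valid subgroup for $I$ itself. To find $I'$, work inside $M=P_p\rtimes T$ and observe that, since $\phi$ takes values in $\Int(N)$, every $M$-conjugate of $I$ already lies in the $P_p$-conjugacy class of $I$ (for $t\in T$ the subgroup ${}^{t}I$ is $N$-conjugate to $I$, and as $N=P_p\times\prod_{p'\neq p}P_{p'}$ the conjugating element can be chosen in $P_p$); hence this class has $p$-power size, so $|M:\nor_M(I)|$ is a power of $p$, and since $M$ is solvable, $\nor_M(I)$ contains a conjugate of the Hall $p'$-subgroup $T$ of $M$, which after conjugating $I$ gives the desired $T$-invariant $I'$.

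For $(i)\Rightarrow(ii)$, I would first record a lemma, proved straight from the definition: if $G$ is evolving and $S\trianglelefteq G$ is a normal Sylow $r$-subgroup, then $G\to\Aut(S)$ factors through $\Int(S)$. Indeed, given $U\leq S$, the evolving property applied to the $r$-subgroup $U$ yields $J\leq G$ with $U\leq J$, $|G:J|$ a power of $r$ and $|J:U|$ coprime to $r$; since $U\leq S\trianglelefteq G$ one gets $J\cap S=U$, whence $J\leq\nor_G(U)$ and $|G:\nor_G(U)|$ is a power of $r$. All $G$-conjugates of $U$ lie in $S$, and, $S$ being normal, the number of $S$-orbits in the $G$-conjugacy class of $U$ equals $|G:S\,\nor_G(U)|$; this divides $|G:S|$, so is coprime to $r$, while it also divides the $r$-power $|G:\nor_G(U)|$, hence equals $1$. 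So every $G$-conjugate of $U$ is an $S$-conjugate. Now induct on $|G|$: if $G$ is nilpotent take $N=G$, $T=1$; otherwise let $p$ be the largest prime dividing $|G|$ and $P=S_p$, which is normal in $G$ by supersolvability (Proposition C), so by the lemma $G$ --- and hence any Hall $p'$-subgroup $H$ --- acts on $P$ by intense automorphisms. Write $G=P\rtimes H$; then $H\cong G/P$ is evolving. If $H$ is nilpotent, take $N=P$ and $T=H$. If $H$ is not nilpotent, the inductive hypothesis gives $H=N_H\rtimes_\psi T_H$ with $N_H,T_H$ nilpotent of coprime orders and $\psi\colon T_H\to\Int(N_H)$, and one takes $N=P\times N_H$, $T=T_H$: then $N$ is nilpotent and normal in $G$, $T$ is a nilpotent complement of coprime order, and $T$ acts intensely on each of $P$ (as a subgroup of $H$) and $N_H$ (via $\psi$), hence on their direct product $N$ --- using that a subgroup of a direct product of two groups of coprime orders is the product of its two projections --- so that $G=N\rtimes_\phi T$ with $\phi\colon T\to\Int(N)$.

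The one step that is not bookkeeping is the claim, needed in the last case, that $N_H$ centralises $P$ (which is exactly what makes $N=P\times N_H$ a direct product, hence nilpotent, and forces $N\trianglelefteq G$). When $H$ is non-nilpotent, $|G|$ has at least three prime divisors, so $p\geq5$; and I expect this claim to follow from the structure of intense automorphism groups of $p$-groups developed in Section 4 --- which for $p\geq5$ forces $\Int(P)$ to be quite small --- in combination with the prime-intense description of $H$ from Theorem B and the coprime-action decomposition $N_H=[N_H,T_H]\times C_{N_H}(T_H)$, the conclusion being that $N_H\leq C_H(P)$. This is the genuine obstacle; the rest of the argument is organisational.
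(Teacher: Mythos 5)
Your $(ii)\Rightarrow(i)$ direction is correct and takes a genuinely more elementary route than the paper: where the paper verifies that $N\rtimes_\phi T$ is prime-intense and appeals to Theorem~B, you verify the evolving property directly, using Glauberman-style counting in $M=P_p\rtimes T$ to find a $T$-stable $P_p$-conjugate of a given $p$-subgroup. Your opening lemma --- that $G$ acts on a normal Sylow subgroup $S$ by intense automorphisms, proved by a clean double-coset count showing $G=S\,\nor_G(U)$ --- is also correct and is essentially the engine behind the paper's Proposition~\ref{ev to corr}.

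The gap is in the inductive step of $(i)\Rightarrow(ii)$, and it is genuine. You invoke the inductive hypothesis to decompose $H=N_H\rtimes_\psi T_H$, then set $N=P\times N_H$, and need $N_H$ to centralise $P$. But the inductive hypothesis asserts only the \emph{existence} of some decomposition of $H$, and an arbitrary one will not do. Concretely, take an evolving $G$ with prime divisors $p>q>r>s$ where $S_q$ acts nontrivially on $S_p$ and $S_s$ acts nontrivially on $S_r$ and there are no other nontrivial actions, so $H=S_q\times(S_r\rtimes S_s)$. Then $N_H=S_q\times S_r$, $T_H=S_s$ is a perfectly valid decomposition of $H$ of the required type, but $N_H$ does not centralise $P=S_p$. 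Your own recursive recipe (peel off the largest remaining prime until the complement is nilpotent) produces exactly this bad choice. So the statement ``$N_H$ centralises $P$'' is false for a generic inductive decomposition; it holds only for a carefully chosen one, and the ingredient that makes the careful choice possible --- and that you neither prove nor cite --- is precisely what the paper isolates as Proposition~\ref{no consecutive edges}: if a prime $q$ acts nontrivially on some larger prime, then no smaller prime acts nontrivially on $q$ (equivalently $\pi_s\cap\pi_t=\emptyset$). This forces the set of ``acting'' primes and the set of ``acted-upon'' primes to be disjoint, which is what lets one declare $T=\prod_{q\in\pi_s}S_q$ and $N=\prod_{q\in\pi_t\cup\pi_0}S_q$ globally (Theorem~\ref{structure}), rather than hoping the induction lands on the right split. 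Your appeal to ``$\Int(P)$ being small for $p\geq5$'' and to the coprime-action decomposition of $N_H$ does not address this, because the obstruction is not about the internal structure of $\Int(P)$ for a single prime but about a compatibility condition across three primes $r<q<p$. To repair your argument you would need to prove a version of the no-consecutive-edges lemma and then either run the induction with a strengthened hypothesis (that the decomposition can be chosen with $N\supseteq S_p$ for every acted-upon or inert prime $p$) or, as the paper does, bypass induction entirely and read the decomposition off the graph.
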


\noindent
Theorems \ref{T1} and \ref{evolvingintense} can be generalized to profinite groups. Indeed, pro-evolving groups can be characterized by an analogue of Theorem \ref{T1}($i$) where Tate groups are replaced by cohomology groups of positive degree, modules by discrete modules, and subgroups by open ones. Extending Theorem \ref{evolvingintense}, one can prove that pro-evolving groups are semi-direct products of two pro-nilpotent groups of coprime orders where the action is given by automorphisms sending each closed subgroup to a conjugate. Given the origins of evolving groups, it would be interesting to determine which pro-evolving groups can be realized as absolute Galois groups of a field.

\section{Evolving groups}\label{section 1}

\noindent
The current section is dedicated to subgroups and quotients of evolving groups. We prove that quotients of evolving groups are evolving and that the same applies to their normal subgroups, while arbitrary subgroups of evolving groups are not necessarily evolving.
We present an example of a an evolving group with a non-evolving subgroup after proving Proposition \ref{property preserved}.

\begin{definition}\label{p-evolution}
Let $G$ be a finite group and let $p$ be a prime number. Let $I$ be a $p$-subgroup of $G$. We say that $J$ is a \emph{$p$-evolution of $I$ in $G$} if $J$ is a subgroup of $G$ with the following properties:
\begin{itemize}
 \item[$i.$] the subgroup $I$ is contained in $J$; and
 \item[$ii.$] the index $|G:J|$ is a power of $p$; and
 \item[$iii.$] the prime $p$ does not divide $|J:I|$.
\end{itemize}
\end{definition}

\noindent
In a finite group $G$, each Sylow $p$-subgroup has a $p$-evolution, namely $G$. 
In general, however, $p$-evolutions of arbitrary $p$-subgroups might not exist. 
For instance, the trivial subgroup of the alternating group $A_5$ has a $5$-evolution, but no $2$-evolution. 
The finite groups in which every $p$-subgroup has a $p$-evolution are exactly the evolving groups.

\begin{lemma}\label{property preserved}
Let $G$ be an evolving group and let $N$ be a normal subgroup of $G$. 
Then both $N$ and $G/N$ are evolving groups. 
\end{lemma}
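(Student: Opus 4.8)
The plan is to prove the two assertions separately, in each case starting from an arbitrary prime $p$ and an arbitrary $p$-subgroup and producing a $p$-evolution by pulling the problem back into $G$, where evolution is available by hypothesis.

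For the quotient $G/N$: let $p$ be a prime and let $\bar I$ be a $p$-subgroup of $G/N$. Write $\pi:G\to G/N$ for the projection and let $I=\pi^{-1}(\bar I)$, so that $I$ is a subgroup of $G$ with $|I|=|\bar I|\cdot|N|$. Since $\bar I$ is a $p$-group, I would pass to a Sylow $p$-subgroup $P$ of $I$; then $\pi(P)$ is a Sylow $p$-subgroup of $\bar I$, hence equals $\bar I$ (as $\bar I$ is itself a $p$-group). Now apply the evolving hypothesis to the $p$-subgroup $P$ of $G$: there is a subgroup $J\leq G$ with $P\leq J$, with $|G:J|$ a $p$-power, and with $|J:P|$ coprime to $p$. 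The natural candidate for a $p$-evolution of $\bar I$ is $\bar J:=\pi(J)\cdot\bar I$, or more cleanly $\pi(JN)$. I then need to check three things: that $\bar I\subseteq \overline{JN}$ (clear, since $I\subseteq JN$ once $P\leq J$ — here one must make sure $N\subseteq JN$ and $I=PN$, which follows because $P$ is Sylow in $I$ and $N\trianglelefteq I$ forces $I=PN$ by a Frattini-type argument, or simply because $|PN|=|P||N|/|P\cap N|$ and $P\cap N$ is a $p$-group while... ); that $|G/N : \overline{JN}| = |G:JN|$ is a $p$-power (it divides $|G:J|$); and that $|\overline{JN}:\bar I|$ is coprime to $p$, which should reduce to $|JN:I|$ dividing $|J:P|\cdot|N:N\cap\text{(something)}|$ — the key point being that the $N$-part cancels between numerator and denominator since $N$ sits inside both $\overline{JN}$ upstairs... wait, no: upstairs $N\subseteq JN$ and $N\subseteq I$? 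That is false in general. So the correct bookkeeping is via orders mod $N$: $|\overline{JN}:\bar I| = |JN|/|IN| = |JN|/|I|$ (since $N\subseteq I$ is false, but $IN=I$ is false too)—hence I should instead work with $|JN|/|\pi^{-1}(\bar I)| $ and observe $\pi^{-1}(\bar I)=PN=I$ contains $P$ with index coprime to $p$... I expect this index computation, done carefully with the Sylow subgroup $P$ in place of $I$, to be the one genuinely delicate step, and the cleanest route is: $|\overline{JN} : \bar I|$ divides $|JN:I|$ which divides $|J:J\cap I| \cdot (\text{$p'$-stuff})$, and since $P\le J\cap I$ and $|J:P|$ is a $p'$-number, $|J:J\cap I|$ is a $p'$-number.

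For the normal subgroup $N$: let $p$ be a prime and let $I$ be a $p$-subgroup of $N$; then $I$ is also a $p$-subgroup of $G$, so by hypothesis there is a $p$-evolution $J$ of $I$ in $G$. The obvious candidate inside $N$ is $J\cap N$. Since $I\subseteq N$ and $I\subseteq J$, we have $I\subseteq J\cap N$, giving condition ($i$). For condition ($iii$), $|J\cap N : I|$ divides $|J:I|$, which is coprime to $p$. The one point needing the normality of $N$ — and the main obstacle in this half — is condition ($ii$): that $|N : J\cap N|$ is a $p$-power. Here I would use that $N$ is normal, so $JN$ is a subgroup of $G$ and $|N:J\cap N| = |JN:J|$, which divides $|G:J|$, a $p$-power. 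That finishes the argument, modulo writing out the index identities cleanly.
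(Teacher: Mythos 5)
Your proof is correct and takes essentially the same route as the paper. For the normal-subgroup half you intersect a $p$-evolution $J$ of $I$ in $G$ with $N$ and observe $|N:J\cap N|=|JN:J|$ divides $|G:J|$; this is verbatim the paper's argument. For the quotient half you pull $\bar I$ back to $I=\pi^{-1}(\bar I)$, pass to a Sylow $p$-subgroup $P$ of $I$, and take a $p$-evolution $J$ of $P$ in $G$; again this matches the paper. The one small variation is the candidate: you use $JN/N$, whereas the paper shows that $N$ is already contained in $J$ (because $J$ contains, for every prime $q$, a Sylow $q$-subgroup of $N$: namely $P\cap N$ for $q=p$, and $J_q\cap N$ for $q\neq p$ where $J_q\in\Syl_q(J)\subseteq\Syl_q(G)$) and hence $I=PN\subseteq J$, so that $J/N$ itself serves as the $p$-evolution. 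Both work; yours avoids proving $N\leq J$ at the small cost of passing to $JN$. One slip worth flagging: your mid-paragraph worry that ``$N\subseteq I$ is false in general'' is itself false --- since $I=\pi^{-1}(\bar I)$, the containment $N\subseteq I$ always holds, so $IN=I$ and the index $|\overline{JN}:\bar I|=|JN:I|=|JN:PN|$ needs no bookkeeping detour: $|JN:PN|=|J:P|/|J\cap N:P\cap N|$ divides $|J:P|$ and is therefore coprime to $p$.
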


\begin{proof}
Let $p$ be a prime number. Then, for every $p$-subgroup $I$ of $N$ and for every $p$-evolution $K$ of $I$ in $G$, the subgroup $J=N\cap K$ is a $p$-evolution of $I$ in $N$. This proves that $N$ is evolving. 
Let now $I/N$ be a subgroup of $G/N$ such that $|I:N|$ is a $p$-power. 
Let $I_p$ be a Sylow $p$-subgroup of $I$ and let $J$ be a $p$-evolution of $I_p$ in $G$; then $I_p$ belongs to 
$\Syl_p(J)$. Let  moreover $q\neq p$ be a prime number and let $J_q\in\Syl_q(J)$. 
Then $J_q$ belongs to $\Syl_q(G)$ and thus $J_q\cap N$ is a Sylow subgroup both of $N$ and, $|I:N|$ being a $p$-power, of $I$. 
As a consequence, $J$ contains $I$ and $J/N$ is a $p$-evolution of $I/N$ in $G/N$. It follows that $G/N$ is evolving.
\end{proof}

\noindent
Let $p$ be an odd prime number and define the group
\[
G=
\left\{
\begin{pmatrix} 
\lambda & u & v \\ 
0 & 1 & w \\
0 & 0 & \lambda^{-1}
\end{pmatrix} 
\ :\  u,v,w\in\F_p, \lambda\in\F_p^*
\right\},
\]
which is isomorphic to a semidirect product $\Heis(\F_p)\rtimes\F_p^*$. 
With $N=\Heis(\F_p)$ and $T$ the set of diagonal matrices of $G$, Theorem \ref{evolvingintense} yields that $G$ is evolving.
On the other hand, the subgroup $W$ 
of $G$ corresponding to the collection of matrices with $w$-entry equal to $0$ is not evolving. To show so, one can display a subgroup with no evolution or build a contradiction to Theorem \ref{evolvingintense} using results from Section \ref{section intense}.

\section{The proof of Theorem \ref{T1}}\label{section T1}

\noindent
The aim of the present section is to give the proof of Theorem \ref{starter}. We refer 
to Chapter $\mathrm{IV}$ from \cite{casfrol} for basic concepts concerning group cohomology and Tate cohomology groups in particular.
In order to prove Theorem \ref{starter}, we reduce to the case in which the cocycles have order equal to a prime power. We will use that, if $G$ is a finite group, $M$ a $G$-module, and $(S_p)_{p | |G|}$ a collection of Sylow $p$-subgroups of $G$, then, for example as a consequence of Corollary $3$ from \cite[Ch.~$\mathrm{IV}$]{casfrol}, the homomorphism $\widehat{\hc}{^{q}}(G,M)\rightarrow \bigoplus_{p | |G|}\widehat{\hc}{^{q}}(S_p,M)$ that is defined by $c\mapsto (\Res^G_{S_p}(c))_{p| |G|}$ is injective.

\begin{lemma}\label{prime order starter}
Let $G$ be a finite group, let $M$ be a $G$-module, and let $q$ be an integer. 
Let moreover $p$ be a prime number, let $c\in\widehat{\hc}{^{q}}(G,M)$ be of order a power of $p$, and assume that
every $p$-subgroup of $G$ has a $p$-evolution in $G$.  
Then the minimum of the set ${\{|G:H| : H\leq G \ \ \text{with} \ \ c\in\ker\Res_H^G\}}$ coincides with its greatest common divisor and it is a $p$-th power. 
\end{lemma}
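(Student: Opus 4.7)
The plan is to analyze the set $\mathcal{S} = \{|G:H| : H\leq G, \, c\in\ker\Res_H^G\}$ by separating the $p$-part of each index from the rest. I will first show that $\gcd \mathcal{S}$ is automatically a $p$-power, then use the $p$-evolution hypothesis to show that the $p$-part of each element of $\mathcal{S}$ already lies in $\mathcal{S}$; this will force $\min \mathcal{S}$ itself to be a $p$-power and equal to $\gcd \mathcal{S}$.

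The first step rests on the identity already recalled in the paper: for any subgroup $K \leq G$, the composition $\mathrm{cor}_K^G \circ \Res_K^G$ is multiplication by $|G:K|$ on $\widehat{\hc}^{q}(G,M)$, while $\widehat{\hc}^{q}(K,M)$ is annihilated by $|K|$. Consequently, if $|K|$ is coprime to $p$, then the restriction to $K$ of any $p$-power order class vanishes automatically. Applied to a Sylow $\ell$-subgroup $S_\ell$ of $G$ for each prime $\ell \neq p$, this shows that $|G:S_\ell|\in\mathcal{S}$, and since $\gcd_{\ell \neq p} |G:S_\ell| = |G|_p$, we conclude that $\gcd \mathcal{S}$ divides $|G|_p$ and is in particular a $p$-power.

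The second step uses the hypothesis. Given any $H \in \mathcal{S}$, pick a Sylow $p$-subgroup $S$ of $H$; then $\Res_S^G(c) = \Res_S^H \Res_H^G(c) = 0$. Let $J$ be a $p$-evolution of $S$ in $G$. Since $|J:S|$ is coprime to $p$, $S$ is a Sylow $p$-subgroup of $J$, so the same corestriction-restriction identity applied inside $J$ promotes $\Res_S^G(c)=0$ to $\Res_J^G(c) = 0$, because $|J:S|$ is invertible on the $p$-power order class $\Res_J^G(c)$. Hence $|G:J| \in \mathcal{S}$, and a direct index count shows $|G:J| = |G|_p/|S| = |G|_p/|H|_p = |G:H|_p$, which divides $|G:H|$.

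The conclusion is then pure bookkeeping. Let $m = \min \mathcal{S}$, realized by some $H$; constructing $J$ as above gives $|G:H|_p \in \mathcal{S}$ with $|G:H|_p \leq m$, so by minimality $m = |G:H|_p$ is a $p$-power. Applied to an arbitrary $H' \in \mathcal{S}$, the same construction yields $|G:H'|_p \in \mathcal{S}$ with $|G:H'|_p \geq m$; since both are $p$-powers, $m$ divides $|G:H'|_p$, and so $m$ divides $|G:H'|$. Hence $m$ divides every element of $\mathcal{S}$, giving $m \mid \gcd \mathcal{S}$; the reverse divisibility is automatic. I do not foresee a genuine obstacle: the proof is a clean application of the corestriction-restriction formula, and the entire role of the $p$-evolution hypothesis is precisely to lift a vanishing statement at a Sylow $p$-subgroup of $H$ to one at a subgroup of $G$ of $p$-power index.
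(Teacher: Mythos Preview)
Your proof is correct and follows essentially the same approach as the paper: both arguments first observe that restriction of $c$ to any Sylow $\ell$-subgroup with $\ell\neq p$ vanishes, and then use a $p$-evolution to pass from vanishing on a $p$-subgroup to vanishing on a subgroup of $p$-power index. The only cosmetic differences are that the paper selects a $p$-subgroup $I_p$ of maximal order with $\Res_{I_p}^G(c)=0$ and takes its $p$-evolution directly (invoking injectivity of restriction to a family of Sylow subgroups), whereas you phrase the same step via the corestriction--restriction identity and a short min/gcd bookkeeping argument.
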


\begin{proof}
Let $r\neq p$ be a prime number and let $S_r\in\Syl_r(G)$. Being annihilated by both a power of $p$ and a power of $r$, the restriction of $c$ to $S_r$ is zero.    
Let now $I_p$ be a $p$-subgroup of maximal order of $G$ with the property that $\Res_{I_p}^G(c)=0$ and let $J_p$ be a $p$-evolution of $I_p$ in $G$. 
The subgroup $I_p$ being a Sylow $p\text{-subgroup}$ of $J_p$, it follows that $c$ is zero when restricted to a selected family of Sylows of $J_p$ and thus ${\Res_{J_p}^G(c)=0}$. Moreover, from the maximality of $I_p$, it follows that $|G:J_p|$ is the greatest common divisor of the set 
${\{|G:H| : H\leq G \ \ \text{with} \ \ c\in\ker\Res_H^G\}}$, which is then also a $p$-th power.  
\end{proof}

\begin{proposition}\label{starter:3to1}
Let $G$ be an evolving group. Let, moreover, $M$ be a $G$-module, let $q$ be an integer, and let $c\in\widehat{\hc}{^{q}}(G,M)$. 
Then the minimum and the greatest common divisor of the set ${\{|G:H| : H\leq G \ \ \text{with} \ \ c\in\ker\Res_H^G\}}$ coincide. 
\end{proposition}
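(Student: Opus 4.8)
The plan is to reduce the general statement to the prime-power case already handled by Lemma \ref{prime order starter}. Let $M$ be a $G$-module, $q$ an integer, and $c\in\widehat{\hc}{^{q}}(G,M)$. The key observation is that $\widehat{\hc}{^{q}}(G,M)$ is a finite abelian torsion group, annihilated by $|G|$, so $c$ decomposes uniquely as a sum $c=\sum_{p\mid |G|} c_p$ where $c_p$ is the $p$-primary component of $c$, i.e.\ $c_p$ has order a power of $p$. Concretely, $c_p$ is obtained from $c$ by multiplication by a suitable integer (the one congruent to $1$ modulo the $p$-part of $\ord(c)$ and to $0$ modulo the prime-to-$p$ part), so in particular for any subgroup $H\leq G$ the restriction $\Res_H^G(c_p)$ is the $p$-primary component of $\Res_H^G(c)$. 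Consequently, for a subgroup $H$ we have $c\in\ker\Res_H^G$ if and only if $c_p\in\ker\Res_H^G$ for every prime $p$.

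Next I would invoke Lemma \ref{prime order starter}: since $G$ is evolving, every $p$-subgroup of $G$ has a $p$-evolution, so for each prime $p\mid|G|$ the set $\{|G:H| : H\leq G,\ c_p\in\ker\Res_H^G\}$ has a minimum equal to its greatest common divisor, and this number is a power of $p$; call it $p^{a_p}$, and fix a witnessing subgroup $H_p$ with $|G:H_p|=p^{a_p}$ and $c_p\in\ker\Res_{H_p}^G$. I now claim that $\prod_{p\mid|G|} p^{a_p}$ is simultaneously the minimum and the gcd of the set $D = \{|G:H| : H\leq G,\ c\in\ker\Res_H^G\}$. For the gcd: any $H$ with $c\in\ker\Res_H^G$ satisfies $c_p\in\ker\Res_H^G$ for all $p$, so $p^{a_p}\mid |G:H|$ for every $p$, hence $\prod_p p^{a_p}$ divides $|G:H|$; thus $\prod_p p^{a_p}$ divides $\gcd D$. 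For the reverse, and for realizing $\prod_p p^{a_p}$ as an actual index, I would intersect the witnesses: set $H=\bigcap_{p\mid|G|} H_p$. Then $c_p\in\ker\Res_{H}^G$ for every $p$ (restriction factors through $\Res_{H_p}^G$), hence $c\in\ker\Res_H^G$, so $|G:H|\in D$. It remains to check $|G:H|=\prod_p p^{a_p}$: since $|G:H|$ divides $\prod_p|G:H_p|=\prod_p p^{a_p}$ and is divisible by each $p^{a_p}$ (because $p^{a_p}=|G:H_p|$ divides $|G:H|$), and the $p^{a_p}$ are pairwise coprime, we get $|G:H|=\prod_p p^{a_p}$. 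Therefore $\prod_p p^{a_p}\in D$, it divides every element of $D$, so it is both the minimum and the gcd of $D$, and in particular the two coincide.

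The only subtle point is the elementary divisibility bookkeeping in the last paragraph — that $|G:\bigcap_p H_p|$ is exactly the product of the coprime local indices. This follows because $|G:H|$ divides the product of the $|G:H_p|$ (a standard fact about intersections of subgroups) while being divisible by each coprime factor $|G:H_p|$, forcing equality; I expect this to be the main, though still routine, obstacle, with everything else being formal consequences of the primary decomposition of torsion abelian groups together with Lemma \ref{prime order starter}. Note also the degenerate case $c=0$: then $H=G$ works, all $a_p=0$, and the common value is $1$, consistently with the convention that the empty product and the gcd of a set containing $1$ both equal $1$.
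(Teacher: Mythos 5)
Your proof is correct and follows essentially the same route as the paper's: decompose $c$ into its $p$-primary components $c_p$, apply Lemma \ref{prime order starter} to each to obtain witnesses $H_p$ with coprime prime-power indices, then intersect and use the same divisibility bookkeeping. The only cosmetic difference is that you spell out more carefully why restriction commutes with taking primary components and why $|G:\bigcap_p H_p|$ equals the product of the coprime local indices, both of which the paper leaves implicit.
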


\begin{proof}
Write $c=\sum_{p | |G|}c_p$, with $c_p$ in the $p$-primary component of $\widehat{\hc}{^{q}}(G,M)$. 
By Lemma \ref{prime order starter}, for every prime $p$, there exists a subgroup $J_p\leq G$ such that 
$|G:J_p|$ is a $p$-power and it is both the minimum and the greatest common divisor of the set ${\{|G:H| : H\leq G \ \ \text{with} \ \ c_p\in\ker\Res_H^G\}}$.
For each prime $p$, fix $J_p$ and define $L=\bigcap_{p | |G|}J_p$. Then $\Res_L^G(c)=0$ and $|G:L|$ divides the index in $G$ of every subgroup on which the restriction of $c$ vanishes.
Indeed, if $K$  is a subgroup such that $\Res^G_K(c)=0$, then, for every prime $p$, one has $\Res_{K}^G(c_p)=0$ from which it follows that $|G:J_p|$ divides $|G:K|$ and thus
$|G:L|=\prod_{p| |G|}|G:J_p|$ divides $|G:K|$. 
\end{proof}

\noindent
For the second part of the proof of Theorem \ref{starter}, we will use a convenient description of the Tate groups in degree $0$ in terms of permutation modules.

\begin{definition}
Let $G$ be a group and let $X$ be a finite $G$-set. 
The \emph{permutation module associated to $X$} is  the set 
$M=\Z^X$ together with the left action $G \rightarrow \Sym(M)$ that is defined by 
$g\mapsto (f\mapsto (x\mapsto f(g^{-1}x))).$
\end{definition}

\noindent
Let $G$ be a group, let $X$ be a finite $G$-set, and let $M$ be the permutation module associated to $X$. Then $f\in\hc^0(G,M)$ if and only if, for any choice of $x\in X$ and $g\in G$, 
one has $f(x)=(gf)(x)=f(g^{-1}x)$ and therefore $\hc^0(G,M)$ consists precisely of the functions $X\rightarrow\Z$ that are constant on each $G$-orbit.

\begin{proposition}\label{tate permutation}
Let $G$ be a finite group and let $X$ be a finite $G$-set. Let $M$ be the permutation module associated to $X$. Then the map
$$\gamma_G:\widehat{\hc}{^{0}}(G,M)\longrightarrow\bigoplus_{Gx\in G\backslash X}\Z/|G_x|\Z$$  that is defined by 
$[f]\mapsto(f(x)\bmod|G_x|)_{Gx\in G\backslash X}$ is an isomorphism of groups.
Moreover, if $H$ is a subgroup of $G$, then the following diagram is commutative
\vspace{5pt}
\begin{equation}
\begin{CD}
\widehat{\hc}{^{0}}(G,M)\ \ \ \ \ \ @>\gamma_G>>  \bigoplus_{Gx\in G\backslash X}\Z/|G_x|\Z \\
@V{\Res_H^G}VV                            @VV{\pi}V \\
\widehat{\hc}{^{0}}(H,M)\ \ \ \ \ \ @>\gamma_H>> \ \ \ \  \bigoplus_{Gx\in G\backslash X}\ \bigoplus_{Hy\in H\backslash Gx}\Z/|H_y|\Z
\end{CD}\nonumber
\end{equation}
\vspace{5pt}\\
\noindent
where $\pi$ is the projection map, which, restricted to each $Gx$-th direct summand, sends the element $m\bmod|G_x|$ to $(m\bmod|H_y|)_{Hy\in H\backslash Gx}$.
\end{proposition}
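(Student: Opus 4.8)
The plan is to prove both assertions by a direct computation with cocycles, using the concrete description of $\widehat{\hc}{^{0}}$ for permutation modules. Recall that for a finite group $G$ and a $G$-module $M$, one has $\widehat{\hc}{^{0}}(G,M) = M^G / N_G M$, where $N_G = \sum_{g\in G} g$ is the norm element. When $M = \Z^X$ is the permutation module associated to the finite $G$-set $X$, the invariants $M^G = \hc^0(G,M)$ consist, as noted in the excerpt, of the functions $f\colon X\to\Z$ that are constant on each $G$-orbit; write $f_{Gx}\in\Z$ for the common value on the orbit $Gx$. Thus $M^G \cong \bigoplus_{Gx\in G\backslash X}\Z$ via $f\mapsto (f_{Gx})_{Gx}$.

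**Step 1: identify the norm submodule.** First I would compute $N_G M$ under this identification. For a basis element $e_x\in\Z^X$ (the indicator function of $x$), $N_G e_x = \sum_{g\in G} e_{gx} = |G_x|\sum_{y\in Gx} e_y$, since each point of the orbit $Gx$ is hit exactly $|G_x|$ times. Hence $N_G e_x$ corresponds, under $M^G\cong\bigoplus_{Gx}\Z$, to $|G_x|$ in the $Gx$-summand and $0$ elsewhere; and since the $e_x$ with one $x$ chosen per orbit generate $M^G$ over $\Z$ together with the relations, one checks that $N_G M = \bigoplus_{Gx\in G\backslash X}|G_x|\Z$ inside $M^G = \bigoplus_{Gx}\Z$. (Here I use that $N_G m$ depends only on the image of $m$ in $M_G$, the coinvariants, and that $M_G\cong\bigoplus_{Gx}\Z$ is generated by the classes of the $e_x$.) Therefore $\gamma_G\colon \widehat{\hc}{^{0}}(G,M) = M^G/N_G M \to \bigoplus_{Gx}\Z/|G_x|\Z$, $[f]\mapsto (f_{Gx}\bmod|G_x|)_{Gx}$, is a well-defined isomorphism of groups. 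This establishes the first claim.

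**Step 2: commutativity of the diagram.** For the square, I would unwind both composites on a class $[f]\in\widehat{\hc}{^{0}}(G,M)$. Going right then down: $\gamma_G([f]) = (f_{Gx}\bmod|G_x|)_{Gx}$, and applying $\pi$ sends this to $\bigl((f_{Gx}\bmod|H_y|)_{Hy\in H\backslash Gx}\bigr)_{Gx}$. Going down then right: $\Res_H^G[f]$ is the class of $f$ viewed as an $H$-invariant function, whose value on the $H$-orbit $Hy\subseteq Gx$ is $f(y)=f_{Gx}$; hence $\gamma_H(\Res_H^G[f]) = \bigl((f_{Gx}\bmod|H_y|)_{Hy}\bigr)_{Gx}$, and the orbit decomposition $H\backslash X = \coprod_{Gx\in G\backslash X} H\backslash Gx$ identifies the two target groups so that the two expressions agree. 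The only subtlety here is that $\Res_H^G$ on $\widehat{\hc}{^{0}}$ is induced by the identity on $M$ at the level of invariants (restriction in degree $0$ is just the inclusion $M^G\hookrightarrow M^H$ passed to the quotients by norm elements) — I would state this compatibility explicitly, perhaps citing the relevant functoriality in \cite{casfrol}, since it is the one place where one must be careful not to conflate the dimension-shifted descriptions of Tate cohomology.

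**Main obstacle.** The bijectivity of $\gamma_G$ is essentially formal once $N_G M$ is computed, so the genuine content is Step 1 — pinning down that $N_G M$ equals $\bigoplus_{Gx}|G_x|\Z$ rather than something coarser — and the bookkeeping in Step 2 to match the two indexings of the bottom-right group via $H\backslash X = \coprod_{Gx} H\backslash Gx$. Neither is deep, but both require enough care with the distinction between $M^G$, $M_G$, and the image of the norm map that I would write them out rather than leave them to the reader. I expect the diagram chase to be the longest part to write cleanly, precisely because it forces one to fix, once and for all, that restriction in degree $0$ of Tate cohomology is the map induced by $M^G\hookrightarrow M^H$.
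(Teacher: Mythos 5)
Your proof is correct and takes essentially the same approach as the paper's. The paper states well-definedness, surjectivity, and commutativity as ``standard arguments'' and focuses on injectivity — showing that $\gamma_G([f])=0$ forces $f=N_G\phi$ for a suitable $\phi$ supported on orbit representatives — which is the same core computation ($N_G e_x = |G_x|\sum_{y\in Gx}e_y$) that you package more symmetrically as the explicit identification $N_G M = \bigoplus_{Gx}|G_x|\Z$ inside $M^G$.
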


\begin{proof}
With standard arguments it can be seen that $\gamma_G$ is a well-defined surjective group homomorphism and that the above diagram is commutative.
We prove injectivity.
To this end, we recall that $\widehat{\hc}{^{0}}(G,M)$ is the cokernel of the homomorphism $M\rightarrow\hc^0(G,M)$ that is defined by $m\mapsto\sum_{g\in G}gm$.
Let now $f\in\hc^0(G,M)$ be such that $\gamma_G([f])=0$. Then, for all $Gx\in G\backslash X$, the order of $G_x$ divides $f(x)$, 
i.e. there exists an element $\phi_x\in\Z$ such that $f(x)=|G_x|\phi_x$. Choose now a representative $x$ for each orbit $Gx$ and define
$\phi:X\rightarrow\Z$ by sending $x$ to
$\phi_x$ and all other orbit elements to $0$. Then $[f]=[(\sum_{g\in G}g)\phi]=[0]$ and so $\gamma_G$ is injective.
\end{proof}

\noindent
Until the end of Section \ref{section 1}, we will identify $c\in\widehat{\hc}{^{0}(G,M)}$, for a fixed permutation module $M$, with its image under
$\gamma_G$ and, similarly, we will identify the restriction of $c$ to a subgroup $H$ of $G$ with $\pi\gamma_{G}(c)$.

\begin{lemma}\label{case}
Let $G$ be a finite group and let $H,K$ be subgroups of $G$. Set ${M=\Z^{G/K}}$ and let $m\in\widehat{\hc}{^{0}}(G,M)$. Then the following are equivalent.
\begin{itemize}
 \item[$i.$] One has $\Res^G_H(m)=0$.
 \item[$ii.$] For all $g\in G$, there exists $m_g\in \Z/|K|\Z$ such that $|gKg^{-1}\cap H|m_g=m$. 
\end{itemize}
\end{lemma}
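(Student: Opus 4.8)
The plan is to read Lemma~\ref{case} off Proposition~\ref{tate permutation}, applied to the transitive $G$-set $X=G/K$. Since $G$ acts transitively on $X$ and the stabilizer of the trivial coset $eK$ is $K$, Proposition~\ref{tate permutation} provides an isomorphism $\gamma_G:\widehat{\hc}{^{0}}(G,M)\to\Z/|K|\Z$; under the standing identification of a class with its image under $\gamma_G$, the element $m$ becomes an element of $\Z/|K|\Z$. The $H$-orbits on $X$ are the double cosets in $H\backslash G/K$, and the $H$-stabilizer of a coset $gK$ is $H\cap gKg^{-1}$. Fixing double-coset representatives $g_1,\dots,g_t$, the commutative square of Proposition~\ref{tate permutation} identifies $\Res^G_H(m)$ with the tuple $(m\bmod|H\cap g_iKg_i^{-1}|)_{i=1}^{t}$ inside $\bigoplus_{i=1}^{t}\Z/|H\cap g_iKg_i^{-1}|\Z$.

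Next I would convert the vanishing of this tuple into a divisibility condition. Because $H\cap gKg^{-1}\leq gKg^{-1}$ and conjugation by $g$ is bijective, $|H\cap gKg^{-1}|$ divides $|K|$, so the reduction $\Z/|K|\Z\to\Z/|H\cap gKg^{-1}|\Z$ is well defined and its kernel is precisely $|H\cap gKg^{-1}|\cdot(\Z/|K|\Z)$. Hence, for a fixed $g$, the corresponding component of $\Res^G_H(m)$ vanishes if and only if there exists $m_g\in\Z/|K|\Z$ with $|H\cap gKg^{-1}|\,m_g=m$. Running $g$ through $g_1,\dots,g_t$ shows that $(i)$ is equivalent to the assertion that for every double-coset representative $g$ such an $m_g$ exists.

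It remains to promote ``double-coset representative'' to ``arbitrary $g\in G$''. If $x\in HgK$, write $x=hgk$ with $h\in H$ and $k\in K$; then $xKx^{-1}=hgKg^{-1}h^{-1}$, hence $H\cap xKx^{-1}=h(H\cap gKg^{-1})h^{-1}$ has the same order as $H\cap gKg^{-1}$. Thus the condition in $(ii)$ for $x$ coincides with the one for $g$, and $(ii)$ holds for all $g\in G$ exactly when it holds for $g_1,\dots,g_t$. Combining the three steps yields $(i)\Leftrightarrow(ii)$. I expect no real obstacle here; the only care needed is the bookkeeping---tracking the direction of conjugation in the point stabilizers and invoking the elementary identity $\ker(\Z/n\Z\to\Z/d\Z)=d\cdot(\Z/n\Z)$ for $d\mid n$---together with a careful unwinding of the identifications supplied by Proposition~\ref{tate permutation}.
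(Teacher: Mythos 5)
Your proposal is correct and follows the same route the paper uses: the paper's proof also simply invokes Proposition \ref{tate permutation} together with the stabilizer identity $H_{gK}=H\cap gKg^{-1}$. You just unwind the diagram and the elementary divisibility step in more detail, and add the (correct) observation that the condition in $(ii)$ depends only on the double coset $HgK$, which justifies checking it only on representatives.
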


\begin{proof}
This follows directly from Proposition \ref{tate permutation} and the fact that, for each $g\in G$, one has $H_{gK}=H\cap G_{gK}=H\cap gKg^{-1}$.
\end{proof}

\begin{lemma}\label{cocycle}
Let $p$ be a prime number, let $\ord_p$ denote the $p$-adic valuation, and let $\alpha\in\Z_{\geq0}$.
Let moreover $G$ be a finite group, let $S$ be a Sylow $p$-subgroup of $G$, and let $I$ be a subgroup of $G$ of order $p^\alpha$. Define 
$$\cor{L}=\{L\leq G : |L|=|I|\ \ \text{and}\ \ L\neq gIg^{-1}\ \ \text{for all}\ \ g\in G\}$$
and 
$$M=\Z^{G/S}\bm{\oplus}\big(\bigoplus_{L\in\cor{L}}\Z^{G/L}\big).$$
Let $H$ be a subgroup of $G$ and define 
$c=(m,(m_L)_{L\in\cor{L}})\in\widehat{\hc}{^{0}}(G,M)$ by
\[
\begin{sistema}
 m=p^\alpha\bmod |S| \\ 
 m_L=p^{\alpha-1}\bmod |L|,\ \ \text{for all}\ \ L\in\cor{L}.
\end{sistema}
\]
Then $\Res_H^G(c)=0$ if and only if $\ord_p|H|\leq\alpha$ and, if equality holds, every Sylow $p$-subgroup of $H$ is conjugate to $I$ in $G$.
\end{lemma}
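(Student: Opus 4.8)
The plan is to deduce the statement from Lemma~\ref{case}, applied one summand of $M$ at a time. Since $\widehat{\hc}{^{0}}(G,-)$ and the restriction maps are additive, the decomposition $M=\Z^{G/S}\oplus\bigoplus_{L\in\cor{L}}\Z^{G/L}$ induces a restriction-compatible decomposition of $\widehat{\hc}{^{0}}(G,M)$ under which $c$ corresponds to the tuple $(m,(m_L)_{L\in\cor{L}})$ with $m=p^{\alpha}\bmod|S|$ and $m_L=p^{\alpha-1}\bmod|L|$. (When $\alpha=0$ the set $\cor{L}$ is empty, so only the first summand occurs and there is nothing to check for the $m_L$.) Hence $\Res_H^G(c)=0$ if and only if $\Res_H^G(m)=0$ and $\Res_H^G(m_L)=0$ for every $L\in\cor{L}$. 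For a single summand $\Z^{G/K}$ with $K\in\{S\}\cup\cor{L}$, I would note that $|gKg^{-1}\cap H|$ divides $|gKg^{-1}|=|K|$, so condition~($ii$) of Lemma~\ref{case} simplifies to the requirement that $|gKg^{-1}\cap H|$ divide the integer representing the cocycle in that summand, for every $g\in G$.

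I would first treat the $\Z^{G/S}$-summand. Here $|gSg^{-1}\cap H|$ is a power of $p$ dividing $|S|$, so it divides $p^{\alpha}$ exactly when $|gSg^{-1}\cap H|\leq p^{\alpha}$; and as $g$ runs through $G$ the groups $gSg^{-1}$ run through all of $\Syl_p(G)$. So $\Res_H^G(m)=0$ if and only if $|P\cap H|\leq p^{\alpha}$ for all $P\in\Syl_p(G)$. The key elementary fact is that $\max_{P\in\Syl_p(G)}|P\cap H|=p^{\ord_p|H|}$: each $P\cap H$ is a $p$-subgroup of $H$, hence has order at most $p^{\ord_p|H|}$, while any Sylow $p$-subgroup of $H$ is a $p$-subgroup of $G$ and therefore lies inside some $P\in\Syl_p(G)$, giving $|P\cap H|\geq p^{\ord_p|H|}$. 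Therefore $\Res_H^G(m)=0$ if and only if $\ord_p|H|\leq\alpha$.

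Next I would treat the summands $\Z^{G/L}$ for $L\in\cor{L}$. Now $|gLg^{-1}\cap H|$ is a power of $p$ dividing $|L|=p^{\alpha}$, so it divides $p^{\alpha-1}$ if and only if $|gLg^{-1}\cap H|<|L|$, i.e. if and only if $gLg^{-1}\not\subseteq H$. Hence $\Res_H^G(m_L)=0$ for all $L\in\cor{L}$ if and only if $H$ contains no $G$-conjugate of any member of $\cor{L}$; since $\cor{L}$ is, by definition, the set of subgroups of $G$ of order $p^{\alpha}$ that are not $G$-conjugate to $I$, this condition says exactly that every subgroup of $H$ of order $p^{\alpha}$ is $G$-conjugate to $I$.

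It then remains to assemble the three cases according to the value of $\ord_p|H|$. If $\ord_p|H|>\alpha$, the $\Z^{G/S}$-condition fails and $\Res_H^G(c)\neq0$, in agreement with the statement. If $\ord_p|H|<\alpha$, then $H$ has no subgroup of order $p^{\alpha}$, so every condition above is met and $\Res_H^G(c)=0$, again in agreement (the clause ``if equality holds'' being vacuous). If $\ord_p|H|=\alpha$, then the subgroups of $H$ of order $p^{\alpha}$ are precisely its Sylow $p$-subgroups, so $\Res_H^G(c)=0$ if and only if every Sylow $p$-subgroup of $H$ is $G$-conjugate to $I$, as claimed. I do not anticipate a genuine obstacle; the only points demanding care are the compatibility of $\widehat{\hc}{^{0}}$ and restriction with the direct sum decomposition, so that Lemma~\ref{case} may be applied summand by summand, and the interplay between ``subgroups of order $p^{\alpha}$'' and ``Sylow $p$-subgroups'' of $H$, which is exactly what produces the case split at $\ord_p|H|=\alpha$.
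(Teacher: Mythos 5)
Your proposal is correct and takes essentially the same approach as the paper: both apply Lemma~\ref{case} summand by summand to $M=\Z^{G/S}\oplus\bigoplus_{L\in\cor{L}}\Z^{G/L}$, read off that the $\Z^{G/S}$-factor forces $\ord_p|H|\leq\alpha$, and then use the $\Z^{G/L}$-factors to exclude conjugates of the $L\in\cor{L}$ from $H$ when $\ord_p|H|=\alpha$. The only cosmetic difference is that you make the trichotomy $\ord_p|H|\lessgtr\alpha$ and the $\alpha=0$ edge case a little more explicit than the paper does.
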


\begin{proof}
Thanks to Lemma \ref{case}, one has $\Res_H^G(m)=0$ if and only if, for all $g\in G$, the order of 
$H\cap gSg^{-1}$ divides $p^\alpha$ and thus a necessary condition for $\Res^G_H(c)$ to be zero is that $\ord_p|H|\leq\alpha$: we now assume so and prove that such condition is also sufficient.
If $\alpha=0$ we are done, otherwise fix $L\in\cor{L}$. 
Then $\Res_H^G(m_L)=0$ if and only if, for all $g\in G$, the order of $H\cap gLg^{-1}$ divides $p^{\alpha-1}$. If $\ord_p|H|\leq\alpha-1$, this last condition is 
always satisfied. On the other hand, if $\ord_p|H|=\alpha$, then $H\cap gLg^{-1}$ divides $p^{\alpha-1}$ if and only if $H$ does not contain conjugates of $L$.
In such case, 
it follows from the definition of $\cor{L}$ that
the Sylow $p$-subgroups of $H$ are all conjugate to $I$. 
\end{proof}

\begin{proposition}\label{tate0}
Let $G$ be a finite group such that, for every $G$-module $M$ and for every $c\in\widehat{\hc}{^{0}}(G,M)$, the minimum and the greatest common divisor of
the set ${\{|G:H| : H\leq G \ \ \text{with} \ \ c\in\ker\Res_H^G\}}$ coincide. Then 
$G$ is evolving. 
\end{proposition}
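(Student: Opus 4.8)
The plan is to prove the statement directly: fix a prime number $p$ and a $p$-subgroup $I$ of $G$, say of order $p^{\alpha}$, and construct a $p$-evolution of $I$ in $G$. I would invoke the machinery of Lemma~\ref{cocycle}: choose $S\in\Syl_p(G)$, let $\cor{L}$ be the set of non-conjugate would-be conjugates of $I$ as in that lemma, set $M=\Z^{G/S}\oplus\big(\bigoplus_{L\in\cor{L}}\Z^{G/L}\big)$, and let $c=(m,(m_L)_{L\in\cor{L}})\in\widehat{\hc}{^{0}}(G,M)$ be the cocycle defined there by $m=p^{\alpha}\bmod|S|$ and $m_L=p^{\alpha-1}\bmod|L|$. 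Put $\cor{D}=\{|G:H| : H\leq G \text{ with } c\in\ker\Res_H^G\}$. Since $\ord_p|I|=\alpha$, the ``$\leq$'' part of Lemma~\ref{cocycle} gives $\Res_I^G(c)=0$, so $\cor{D}$ is non-empty, and by hypothesis its minimum $d$ equals its greatest common divisor.

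The next step is to identify $d$. First, $d$ is a power of $p$: for every prime $q\neq p$ dividing $|G|$ and every $S_q\in\Syl_q(G)$ we have $\ord_p|S_q|=0\leq\alpha$, so Lemma~\ref{cocycle} yields $\Res_{S_q}^G(c)=0$, whence $|G:S_q|\in\cor{D}$; as $|G:S_q|$ is coprime to $q$, the greatest common divisor $d$ of $\cor{D}$ is coprime to $q$, and letting $q$ vary we conclude $d$ is a $p$-power. Second, I would pin down the exponent. By Lemma~\ref{cocycle} every $H$ with $|G:H|\in\cor{D}$ satisfies $\ord_p|H|\leq\alpha$, hence $p^{\ord_p|G|-\alpha}$ divides $|G:H|$, and therefore $p^{\ord_p|G|-\alpha}\mid d$. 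Conversely $|G:I|\in\cor{D}$ and the $p$-part of $|G:I|$ equals $p^{\ord_p|G|-\alpha}$, so $d\mid|G:I|$ together with $d$ being a $p$-power forces $d\mid p^{\ord_p|G|-\alpha}$. Thus $d=p^{\ord_p|G|-\alpha}$.

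Finally, since $d$ is the \emph{minimum} of $\cor{D}$, there exists a subgroup $J\leq G$ with $\Res_J^G(c)=0$ and $|G:J|=p^{\ord_p|G|-\alpha}$, so that $\ord_p|J|=\alpha$. By the equality case of Lemma~\ref{cocycle}, every Sylow $p$-subgroup of $J$ is $G$-conjugate to $I$; fix $g\in G$ with $gIg^{-1}\in\Syl_p(J)$ and set $J'=g^{-1}Jg$. Then $I\leq J'$, the index $|G:J'|=|G:J|$ is a power of $p$, and $\ord_p|J':I|=\ord_p|J|-\alpha=0$, so $J'$ is a $p$-evolution of $I$ in $G$ in the sense of Definition~\ref{p-evolution}. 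As $p$ and $I$ were arbitrary, $G$ is evolving. (The trivial cases are absorbed by this argument: if $p\nmid|G|$ then $I$ is trivial and $G$ itself is a $p$-evolution; if $\alpha=0$ then $\cor{L}=\emptyset$ and $c=(1\bmod|S|)$, and the argument produces a subgroup of $p$-power index of order prime to $p$.)

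I expect the substantive content to lie entirely in Lemma~\ref{cocycle}, whose vanishing set is engineered to read off both the $p$-valuation of $|H|$ and, at the critical level $\ord_p|H|=\alpha$, the $G$-conjugacy class of a Sylow $p$-subgroup of $H$; once that lemma is available the only work is the elementary index bookkeeping showing $\gcd\cor{D}=p^{\ord_p|G|-\alpha}$ and the harmless conjugation of $J$ into a subgroup containing $I$. Accordingly, the main point to be careful about is simply to apply the hypothesis to \emph{this} particular module and cocycle and to keep the $p$-parts of the indices straight.
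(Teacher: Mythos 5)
Your proof is correct and follows essentially the same route as the paper's: invoke Lemma~\ref{cocycle} with the module $M=\Z^{G/S}\oplus\bigoplus_{L\in\cor{L}}\Z^{G/L}$ and the cocycle $c$, deduce that the greatest common divisor of the index set is $p^{\ord_p|G|-\alpha}$, use the hypothesis to obtain a subgroup $J$ realizing this as an index, and conjugate $J$ so that it contains $I$. Your write-up is somewhat more explicit than the paper's in pinning down the exact value of the gcd (first showing it is a $p$-power via the Sylow $q$-subgroups, then bounding the exponent from both sides), but the substance and the key lemma used are the same.
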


\begin{proof}
Let $p$ be a prime and let $I$ be a $p$-subgroup of $G$. We will construct a {$p$-evolution} $J$ of $I$.
Define $\alpha=\ord_p|I|$. Let $S\in\Syl_p(G)$ and let
$\cor{L}$ and $M$ be as in Lemma \ref{cocycle}. 
Thanks to Proposition \ref{tate permutation}, we identify
$\widehat{\hc}{^{0}}(G,M)$ with $\Z/|S|\Z\bm{\oplus}\big(\bigoplus_{L\in\cor{L}}\Z/|I|\Z\big)$.
Let $c\in\widehat{\hc}{^{0}}(G,M)$ be as in Lemma \ref{cocycle}, which then yields
$\Res_I^G(c)=0$, but the restriction of $c$ to subgroups of higher $p$-power order is non-zero. Moreover, $\Res_{S_r}^G(c)=0$ for every prime number $r\neq p$ and for every $S_r\in\Syl_r(G)$.
It follows that the greatest common divisor of the set ${\{|G:H| : H\leq G \ \ \text{with} \ \ c\in\ker\Res_H^G\}}$ is equal to $p^{\ord_p|G|-\alpha}$ and so there exists
a subgroup $K$ with index ${|G:K|}=p^{\ord_p|G|-\alpha}$ such that $\Res_{K}^G(c)=0$. Fix such $K$. Then, thanks to Lemma \ref{cocycle}, the subgroup $K$ does not contain any $p$-subgroup of order $p^\alpha$ 
which is not $G\text{-conjugate}$ to $I$.
As a result, there exists $g\in G$ such that $gIg^{-1}\leq K$, making $J= g^{-1}Kg$ a $p$-evolution of $I$.
\end{proof}

\noindent
Theorem \ref{starter} follows from Propositions \ref{starter:3to1} and \ref{tate0}.

\section{Intense automorphisms}\label{section intense}

\noindent
Let $G$ be a group. We call an automorphism $\alpha$ of $G$ \emph{intense} if, for each subgroup $H$ of $G$, there exists $x\in G$ such that $\alpha(H)=xHx^{-1}$ and we denote the collection of intense automorphisms of $G$ by $\Int(G)$. The next result collects some basic properties of intense automorphisms, while Lemma \ref{intense vector space} provides us with examples of intense automorphisms that are not inner.

\begin{lemma}\label{intense properties}
Let $G$ be a group and let $N$ be a normal subgroup of $G$. Then the following hold.
\begin{itemize}
 \item[$i$.] There is a natural map $\Int(G)\rightarrow\Aut(N)$, given by $\alpha\mapsto \alpha_{|N}$. 
 \item[$ii$.] The natural projection $G\rightarrow G/N$ induces a homomorphism of groups $\Int(G)\rightarrow\Int(G/N)$.               
\end{itemize}
\end{lemma}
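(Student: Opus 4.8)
The plan is to notice that intensity enters only through one observation: every normal subgroup of $G$ is stabilized setwise by every intense automorphism. Indeed, if $\alpha\in\Int(G)$ then by definition $\alpha(N)=xNx^{-1}$ for some $x\in G$, and since $N$ is normal this gives $\alpha(N)=N$. Once this is in place, both statements become instances of standard facts about automorphisms preserving a normal subgroup, and the only extra bookkeeping is to see that the induced map on the quotient is again intense.

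For $(i)$: from $\alpha(N)=N$ it follows that $\alpha_{|N}$ is a bijective homomorphism $N\to N$, hence lies in $\Aut(N)$, and restriction is visibly compatible with composition of automorphisms, so $\alpha\mapsto\alpha_{|N}$ defines a homomorphism $\Int(G)\to\Aut(N)$. Before invoking this I would record that $\Int(G)$ is indeed a subgroup of $\Aut(G)$: it is closed under composition because the $\alpha$-image of a conjugate of $H$ is a conjugate of $\alpha(H)$, which is itself a conjugate of $H$; and it is closed under inversion because, writing $K=\alpha^{-1}(H)$, intensity of $\alpha$ gives $x\in G$ with $\alpha(K)=xKx^{-1}$, i.e. $H=xKx^{-1}$, whence $\alpha^{-1}(H)=x^{-1}Hx$.

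For $(ii)$: since $\alpha(N)=N$, the automorphism $\alpha$ descends to an automorphism $\bar\alpha$ of $G/N$, and $\alpha\mapsto\bar\alpha$ is the restriction to $\Int(G)$ of the usual homomorphism $\{\beta\in\Aut(G):\beta(N)=N\}\to\Aut(G/N)$. It then remains to check that $\bar\alpha\in\Int(G/N)$. Every subgroup of $G/N$ has the form $H/N$ with $N\leq H\leq G$; applying intensity of $\alpha$ to $H$ produces $x\in G$ with $\alpha(H)=xHx^{-1}$, and passing to quotients yields $\bar\alpha(H/N)=\alpha(H)/N=(xN)(H/N)(xN)^{-1}$, which is a conjugate of $H/N$ in $G/N$. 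Hence $\bar\alpha$ is intense, and corestricting the codomain gives the desired homomorphism $\Int(G)\to\Int(G/N)$.

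I do not anticipate a genuine obstacle here: at bottom the argument is the remark that a normal subgroup is its own only conjugate, combined with the correspondence between subgroups of $G/N$ and subgroups of $G$ containing $N$. The one point that deserves care — and which I would not gloss over — is checking that $\Int(G)$ is closed under inverses, so that it is a group and the two maps constructed above are homomorphisms of groups rather than merely maps of sets.
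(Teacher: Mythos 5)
Your proof is correct. The paper states this lemma without proof, and your argument is exactly the standard verification one would expect: the single observation that an intense automorphism must fix a normal subgroup setwise (since a normal subgroup is its own only conjugate) immediately gives both the restriction map and the induced map on the quotient, and your check that $\Int(G)$ is closed under composition and inversion — so that these are genuinely homomorphisms of groups — is a worthwhile point that the paper leaves implicit.
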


\begin{lemma}\label{intense vector space}
Let $p$ be a prime number and let $V$ be a vector space over the finite field $\F_p$ of $p$ elements. 
Then one of the following holds. 
\begin{itemize}
 \item[$i$.] One has $V=0$.
 \item[$ii$.] The natural map $\F_p^*\rightarrow\Aut(V)$ induces an isomorphism $\F_p^*\rightarrow\Int(V)$.
\end{itemize}
\end{lemma}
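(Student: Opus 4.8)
The plan is to treat $V$ as an abelian group, so $\Int(V)$ consists of those $\F_p$-linear (a priori just group-theoretic) automorphisms $\alpha$ of $V$ with $\alpha(W)$ conjugate to $W$ for every subgroup $W$; since $V$ is abelian, conjugation is trivial, so the condition becomes $\alpha(W)=W$ for every subgroup $W\leq V$. Thus $\Int(V)$ is exactly the group of automorphisms of the abelian group $V$ fixing every subgroup setwise. The case $V=0$ is immediate, so assume $V\neq 0$; I also note that if $\dim_{\F_p}V=1$ then every automorphism fixes the unique proper subgroup $0$, and $\Aut(V)=\F_p^*$ acts on the $1$-dimensional $V$, so $\Int(V)=\Aut(V)\cong\F_p^*$ via the natural map, giving case ($ii$). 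The substance is the case $\dim_{\F_p}V\geq 2$.

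First I would record that the natural map $\F_p^*\to\Aut(V)$, $\lambda\mapsto(v\mapsto\lambda v)$, is injective (since $V\neq 0$) and that scalar multiplications are intense: for any $\lambda\in\F_p^*$ and any subgroup $W$, $\lambda W=W$ because $W$ is closed under the $\F_p$-action. This shows the map lands in $\Int(V)$ and is injective, so it only remains to prove surjectivity, i.e. that every $\alpha\in\Int(V)$ is a scalar. For this I would pick any nonzero $v\in V$; then $\alpha(\langle v\rangle)=\langle v\rangle$, so there is $\lambda_v\in\F_p^*$ with $\alpha(v)=\lambda_v v$. The goal is to show $\lambda_v$ is independent of $v$. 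Take two $\F_p$-linearly independent vectors $v,w$ (possible since $\dim V\geq 2$); applying $\alpha$ to the subgroup $\langle v+w\rangle$ gives $\alpha(v+w)=\lambda_{v+w}(v+w)$, while $\alpha(v+w)=\alpha(v)+\alpha(w)=\lambda_v v+\lambda_w w$. Comparing coefficients in the basis $\{v,w\}$ forces $\lambda_v=\lambda_{v+w}=\lambda_w$. Hence all $\lambda_v$ with $v$ ranging over a basis — and then over all nonzero vectors, by connecting any two via a vector independent from each — are equal to a single $\lambda\in\F_p^*$, and $\alpha$ is multiplication by $\lambda$.

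I would then assemble this: the natural map $\F_p^*\to\Aut(V)$ has image contained in $\Int(V)$, is injective, and (by the previous paragraph) surjective onto $\Int(V)$; therefore it induces an isomorphism $\F_p^*\xrightarrow{\sim}\Int(V)$, which is conclusion ($ii$). This covers all $V\neq 0$, since the argument for $\dim V=1$ and $\dim V\geq 2$ together handle every nonzero case, completing the dichotomy.

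The only mild subtlety — hardly an obstacle — is making sure one does not implicitly assume $\alpha$ is $\F_p$-linear when invoking "comparing coefficients": $\alpha$ is an automorphism of $V$ as a group, hence additive, and over $\F_p$ additivity already forces $\F_p$-linearity (since $\F_p$ is the prime field and scalar multiplication by $n\in\F_p$ is just $n$-fold addition), so no extra hypothesis is needed; I would state this explicitly to keep the argument airtight. Everything else is a short direct computation.
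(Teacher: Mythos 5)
Your proof is correct and follows essentially the same approach as the paper's: observe that abelianness makes intense automorphisms stabilize every subgroup, deduce that each nonzero $v$ gets a scalar $\lambda_v$, use linearity to show $\lambda_v$ is independent of $v$, and finally note that scalar multiplication is intense to get surjectivity. Your version just spells out in more detail the step the paper compresses into ``such $\mu(\alpha,v)$ is independent of the choice of $v$ thanks to the linearity of $\alpha$,'' and makes explicit the (harmless) case split on $\dim V$ and the automatic $\F_p$-linearity of additive maps.
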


\begin{proof}
Assume $V\neq0$. Since $V$ is abelian, every one-dimensional subspace of $V$ is stable under the action of $\Int(V)$. It follows that, for all 
$v\in V\setminus\graffe{0}$ and $\alpha\in\Int(V)$, there exists (a unique) $\mu(\alpha,v)\in\F_p^*$ such that $\alpha(v)=\mu(\alpha,v) v$. Moreover, such $\mu(\alpha,v)$ is independent of the choice of $v$ thanks to the linearity of $\alpha$.
We fix $v\in V\setminus\graffe{0}$ and define $\mu:\Int(V)\rightarrow\F_p^*$ by $\alpha\mapsto\mu(\alpha,v)$, which is an injective homomorphism by construction. Moreover, $\mu$ is surjective, because scalar multiplication by any element of $\F_p^*$ is intense.
\end{proof}

\begin{lemma}\label{equivalent intense coprime}
Let $G$ be a finite group and let $A$ be a subgroup of $\Aut(G)$ of order coprime to that of $G$. Then the following are equivalent.
\begin{itemize}
 \item[$i.$] The group $A$ is contained in $\Int(G)$.
 \item[$ii.$] For each subgroup $H$ of $G$, there exists $x\in G$ such that, for each $\alpha\in A$, one has $\alpha(xHx^{-1})=xHx^{-1}$.
\end{itemize}
\end{lemma}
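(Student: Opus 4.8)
The plan is to prove the two implications separately. The direction $(i)\Rightarrow(ii)$ is essentially immediate: if $A\subseteq\Int(G)$, then for a fixed subgroup $H$ each $\alpha\in A$ individually conjugates $H$ to some $x_\alpha Hx_\alpha^{-1}$, but this does not yet give a single $x$ working for all of $A$. So even this direction needs an argument, and it is here that coprimality enters. I would consider the set $\Omega$ of $G$-conjugates of $H$; this is a $G$-set, and since $A\subseteq\Int(G)$ the group $A$ acts on $\Omega$ (permuting conjugates of $H$ among themselves). The semidirect product $G\rtimes A$ then acts on $\Omega$, transitively because $G$ already acts transitively. Fix $H\in\Omega$ as a basepoint; its stabilizer in $G\rtimes A$ has order $|G\rtimes A|/|\Omega| = |A|\cdot|G|/|\Omega| = |A|\cdot|\nor_G(H)|$. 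Now the stabilizer of $H$ in $G$ is $\nor_G(H)$, which is normal of index $|A|$ coprime to its order inside $\Stab_{G\rtimes A}(H)$; by the Schur--Zassenhaus theorem this stabilizer contains a complement $B$ of order $|A|$. Projecting $G\rtimes A\to A$, the subgroup $B$ maps isomorphically onto $A$ (orders match and the kernel $G$ meets $B$ trivially), so $B=\{(c(\alpha),\alpha):\alpha\in A\}$ for some function $c:A\to G$. The condition that $(c(\alpha),\alpha)$ fixes $H$ unravels precisely to $\alpha(c(\alpha)Hc(\alpha)^{-1})=c(\alpha)Hc(\alpha)^{-1}$ — but this still has a $\alpha$-dependent conjugator, so one more step is needed: because $B$ is a group, a short computation with the multiplication in $G\rtimes A$ shows that the subgroup $\langle c(\alpha):\alpha\in A\rangle$ is contained in a single coset, or more cleanly, one re-examines the action of $B$ on $\Omega$: since $B$ fixes $H$, and $B\cong A$, the original $A$-action on $\Omega$ (twisted by an inner automorphism of $G\rtimes A$ conjugating $A$ to $B$) fixes $H$. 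Conjugating back, there is $x\in G$ with $\alpha(xHx^{-1})=xHx^{-1}$ for all $\alpha$. I expect this bookkeeping to be the main obstacle — making sure the single $x$ emerges and is independent of $\alpha$.

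For $(ii)\Rightarrow(i)$, suppose for each subgroup $H$ there is $x\in G$ with $\alpha(xHx^{-1})=xHx^{-1}$ for all $\alpha\in A$. We must show every $\alpha\in A$ maps every subgroup to a $G$-conjugate. Fix $\alpha\in A$ and an arbitrary subgroup $K$ of $G$; applying the hypothesis to $K$ yields $x\in G$ with $\alpha(xKx^{-1})=xKx^{-1}$, hence $\alpha(K)=\alpha(x^{-1}(xKx^{-1})x)=\alpha(x^{-1})\,(xKx^{-1})\,\alpha(x)=\alpha(x^{-1})x\cdot K\cdot x^{-1}\alpha(x)$, which is a $G$-conjugate of $K$ with conjugator $\alpha(x^{-1})x\in G$. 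Thus $\alpha\in\Int(G)$, and since $\alpha$ was arbitrary, $A\subseteq\Int(G)$. This direction is a direct computation and poses no real difficulty.

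Altogether, the only serious work is in $(i)\Rightarrow(ii)$, where coprimality is indispensable and Schur--Zassenhaus (applied to the point stabilizer in $G\rtimes A$ acting on the conjugacy class of $H$) is the key tool; the rest is formal.
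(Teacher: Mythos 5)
Your argument is correct in substance, but it travels a longer road than the paper does: where you unfold the semidirect product $G\rtimes A$, apply Schur--Zassenhaus to the point stabilizer $\Stab_{G\rtimes A}(H)$, and then conjugate the resulting complement $B$ back to $A$, the paper simply observes that $G$ and $A$ act compatibly on the set $X=\{xHx^{-1}:x\in G\}$ with $G$ transitive, and invokes Glauberman's Lemma (\cite[Lemma~3.24]{isaacs}) to produce an $A$-fixed point. What you have written is, in effect, a proof of Glauberman's Lemma in this special case, so the two approaches are mathematically equivalent; yours is more self-contained, the paper's is shorter and delegates the bookkeeping you rightly flag as ``the main obstacle.'' Two points deserve attention. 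First, your vaguer alternative (``a short computation \dots shows $\langle c(\alpha)\rangle$ is contained in a single coset'') does not work as stated: being a subgroup only makes $c$ a $1$-cocycle, $c(\alpha\beta)=c(\alpha)\alpha(c(\beta))$, and to conclude $c(\alpha)=x\alpha(x^{-1})$ you need this cocycle to be a coboundary, which is exactly the conjugacy part of Schur--Zassenhaus again; so stick with your ``more cleanly'' route, which is correct (note there that the conjugator can be taken in $G$ because $G\rtimes A=GA$ and $A$ normalizes itself). Second, the conjugacy part of Schur--Zassenhaus, like Glauberman's Lemma, requires one of $G$, $A$ to be solvable; your writeup uses it silently. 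The paper flags this explicitly in a remark after the lemma, noting that in all its applications $G$ is a $p$-group and hence solvable, so no appeal to the Odd Order Theorem is needed. You should add the same caveat. Your $(ii)\Rightarrow(i)$ direction is a correct direct computation and matches what the paper dismisses as ``an application of the definitions.''
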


\begin{proof}
Showing $(ii)\Rightarrow(i)$ is an application of the definitions; we prove that $(i)\Rightarrow (ii)$. Write $X=\{xHx^{-1} : x\in G\}$. Then $G$ acts on $X$ by conjugation and $A$ acts on $X$ via intense automorphisms. The actions are compatible and the action of $G$ is transitive. In this situation Glauberman's Lemma \cite[Lemma $3.24$]{isaacs} yields that there exists an element of $X$ that is fixed by $A$. 
\end{proof}

\noindent
We remark that, with the notation from Lemma \ref{equivalent intense coprime}, Glauberman's Lemma would require that at least one between $G$ and $A$ is solvable. To avoid the problem, we could rely on the Odd Order Theorem, however in our applications $G$ will always be a $p$-group and therefore solvable.

\section{The proof of Theorem \ref{behaviour}}\label{section primeintense}

\noindent
In this section we show that evolving groups are supersolvable and give the proof of Theorem \ref{behaviour}.

\begin{definition}
Let $G$ be a finite group and let $\pi$ be the set of prime divisors of $|G|$.
A \emph{Sylow family of $G$} is 
an element $(S_p)_{p\in\pi}\in\prod_{p\in\pi}\Syl_p(G)$ with the property that, if $p,q\in\pi$ with $q<p$, then
$S_q$ normalizes $S_p$ in $G$.
\end{definition}

\noindent
To lighten the notation, given a Sylow family $\cor{S}=(S_q)_{q\in\pi}$ of $G$ and a prime $p$ dividing the order of $G$,
we will write 
$$T_p^{\cor{S}}=\langle S_q\ |\ q<p\rangle \ \ \ \text{and}\ \ \ L_p^{\cor{S}}=\langle S_q\ |\ p<q\rangle.$$
It is an easy exercise to show that, if $\bar{\pi}$ is a subset of $\pi$, then the subgroup 
$\langle S_q\ |\ q\in\bar{\pi}\rangle$ has order $\prod_{q\in\bar{\pi}}|S_q|$. 
As a result, for each prime number $p$ in $\pi$, one has that $G=L_p^{\cor{S}}\rtimes(S_p\rtimes T_p^{\cor{S}})$. We will simply write $G=L_p^{\cor{S}}\rtimes S_p\rtimes T_p^{\cor{S}}$.

\begin{lemma}
Let $G$ be a finite group and let ${\cor{S}}=(S_p)_{p\in\pi}$ and $\cor{R}=(R_p)_{p\in\pi}$ be Sylow families of $G$. Then there exists $x\in G$ such that, for all $p\in\pi$, one has $R_p=xS_px^{-1}$. 
\end{lemma}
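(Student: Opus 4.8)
The plan is to prove the statement by induction on the number of primes $|\pi|$, conjugating one Sylow subgroup at a time and then passing to a suitable subgroup to continue. Let $p$ be the largest prime in $\pi$. Since $S_p$ and $R_p$ both lie in $\Syl_p(G)$, there is some $y \in G$ with $R_p = yS_py^{-1}$; replacing $\cor{S}$ by the conjugate family $(yS_qy^{-1})_{q\in\pi}$ — which is again a Sylow family, as the normalizing relations are preserved under conjugation — we may assume $R_p = S_p$. The point of choosing $p$ maximal is that $S_p = R_p$ is normal in $G$: indeed $S_q$ normalizes $S_p$ for every $q < p$, so $G = \langle S_q : q\in\pi\rangle$ normalizes $S_p$, and similarly $R_p$. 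Hence $S_p = R_p \trianglelefteq G$.

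Next I would pass to the quotient $\bar G = G/S_p$. The images $(\overline{S_q})_{q\neq p}$ and $(\overline{R_q})_{q\neq p}$ are Sylow families of $\bar G$ on the prime set $\pi\setminus\{p\}$: each $\overline{S_q}$ is a Sylow $q$-subgroup of $\bar G$ because $|S_q|$ is coprime to $p$, and the normalizing relations descend. By the inductive hypothesis there is $\bar z \in \bar G$ with $\overline{R_q} = \bar z\,\overline{S_q}\,\bar z^{-1}$ for all $q\neq p$; lift $\bar z$ to $z\in G$ and replace $\cor S$ by $(zS_qz^{-1})_{q\in\pi}$ (still a Sylow family, and still with $S_p$ in place since $S_p$ is normal, so conjugation by $z$ fixes it). After this we have arranged $R_qS_p = S_qS_p$ for every $q\neq p$, i.e. $R_q$ and $S_q$ are both Sylow $q$-subgroups of the group $S_qS_p$ (which has order $|S_q||S_p|$), hence conjugate \emph{inside} $S_qS_p$: there is $t_q \in S_qS_p$ with $R_q = t_q S_q t_q^{-1}$, and we may take $t_q \in S_p$ since $S_p$ is a normal complement — more precisely, writing $t_q = a_qb_q$ with $a_q\in S_p$, $b_q\in S_q$, conjugation by $b_q$ already normalizes $S_q$, so $R_q = a_q S_q a_q^{-1}$ with $a_q\in S_p$.

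The remaining task is to replace the family of local conjugators $(a_q)_{q\neq p} \subseteq S_p$ by a single global $x$. I would do this by a second downward induction on $\pi$, or equivalently peel off primes from the \emph{bottom}. Let $q$ be the smallest prime in $\pi\setminus\{p\}$; conjugating the whole family $\cor S$ by $a_q^{-1}\in S_p$ we may assume $R_q = S_q$ outright. I claim this forces the new conjugators for the remaining primes $q' > q$ to be chosen inside the subgroup $\langle S_p, S_r : r > q\rangle$, which is exactly $L_q^{\cor S}\rtimes S_p$ — wait, one must be careful: after conjugating by $a_q^{-1}$ we again have $R_{q'} = a_{q'}'S_{q'}(a_{q'}')^{-1}$ for some $a_{q'}' \in S_p$, and now additionally $R_q = S_q$. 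Since $S_q$ normalizes $S_p$ but I need $S_q$ to normalize the subgroup in which I am conjugating; the clean way is instead to run the outer induction so that at each stage I work inside the group $H_q = \langle S_r, S_p : r \in \pi,\ r > q\rangle = L_q^{\cor S} S_p$, which is a subgroup of $G$ of order $|S_p|\prod_{r>q}|S_r|$ containing both $R_{q'}, S_{q'}$ for all $q' \ge$ the current prime, apply the smaller-group statement there to get a conjugator, and then move one prime down.

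The main obstacle is precisely this last bookkeeping step: turning the prime-by-prime local conjugators into one element of $G$. The subtlety is that conjugating by $a_q \in S_p$ to fix the prime $q$ may move the already-fixed larger-prime Sylows, so one cannot naively fix primes independently; the resolution is to fix them in decreasing order of size while keeping track that, once $S_p$ and the top primes are aligned, all further conjugators can be chosen in the ``lower'' subgroup $L_q^{\cor S}S_p$, so that the product of the $a_q$'s (in the right order) is the desired $x$. All the verifications that the relevant subsets are subgroups of the stated orders, and that conjugation preserves Sylow families, are the routine exercises already flagged in the text preceding the lemma.
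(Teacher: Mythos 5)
Your opening move is the same as the paper's: take $p$ the largest prime in $\pi$, observe that $S_p$ and $R_p$ are both normal (each is normalized by every generator $S_q$, resp.\ $R_q$, of $G$), hence equal. From there the two arguments diverge, and the divergence is where your proof has a genuine gap.

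The paper does \emph{not} pass to the quotient $G/S_p$. Instead it observes that $T_p^{\cor S}$ and $T_p^{\cor R}$ are complements of $S_p$ in $G$, conjugate by Schur--Zassenhaus, say $T_p^{\cor R}=gT_p^{\cor S}g^{-1}$; then $(gS_qg^{-1})_{q<p}$ and $(R_q)_{q<p}$ are two Sylow families of the strictly smaller group $T_p^{\cor R}$, and induction on $|G|$ finishes at once. The crucial feature is that all the remaining conjugation happens \emph{inside} $T_p^{\cor R}$, which already normalizes $S_p$, so aligning the lower primes can never disturb $S_p$; and within $T_p^{\cor R}$ the same phenomenon recurs for the next-largest prime. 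One never has to reconcile a family of per-prime conjugators.

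Your route, by contrast, quotients by $S_p$, lifts a conjugator, and lands only at $R_qS_p=S_qS_p$; your Step~3 then correctly extracts local conjugators $a_q\in S_p$ with $R_q=a_qS_qa_q^{-1}$. But the merge step — producing a single $x$ from the family $(a_q)$ — is exactly what you label ``the main obstacle,'' and the sketch you give does not close it. Conjugating by $a_q$ to fix the prime $q$ is an element of $S_p$, and $S_p$ has no reason to normalize $S_{q'}$ for other primes $q'$, so ``fixing $q$'' can un-fix an already-aligned $q'$; you notice this yourself mid-paragraph (``wait, one must be careful'') and then redirect to working inside $H_q=L_q^{\cor S}S_p$, but that subgroup omits $S_q$ and cannot serve to align $R_q$ with $S_q$. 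The difficulty is real: two cosets $a_qN_G(S_q)$ and $a_{q'}N_G(S_{q'})$ need not intersect for free, and the clean way to guarantee a common $x$ is essentially to invoke conjugacy of complements of $S_p$ — which is Schur--Zassenhaus again, at which point your Steps~2 and~3 were an unnecessary detour. So the proposal identifies the right structure (top Sylow normal, then recurse) but leaves the decisive local-to-global step unproved, whereas the paper resolves it cleanly in a single application of Schur--Zassenhaus followed by induction inside the Hall complement.
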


\begin{proof}
We work by induction on the order of $G$.
To this end, let $p$ be the largest prime number dividing the order of $G$. Then $S_p=R_p$ and so the Schur-Zassenhaus theorem yields the existence of $g\in G$ such that $T_p^{\cor{R}}=gT_p^{\cor{S}}g^{-1}$. Fix such $g$.
Then $(R_q)_{q<p}$ and $(gS_qg^{-1})_{q<p}$ are Sylow families of $T_p^{\cor{R}}$ and so, by the inductive hypothesis, we are done.
\end{proof}

\noindent
We call a finite group $G$ \emph{prime-intense} if it has a Sylow family $\cor{S}=(S_p)_{p\in\pi}$ such that, for every prime $p\in\pi$, the image of the action $T_p^\cor{S}\rightarrow\Aut(S_p)$ is contained in $\Int(S_p)$.

\begin{proposition}\label{behaviour:2to1}
Every finite prime-intense group is evolving.
\end{proposition}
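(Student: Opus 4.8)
The plan is to fix a prime $p$ dividing $|G|$ and an arbitrary $p$-subgroup $I$ of $G$, and then produce a $p$-evolution of $I$ directly from the prime-intense structure. Let $\cor{S}=(S_q)_{q\in\pi}$ be a Sylow family witnessing that $G$ is prime-intense, and write $G=L_p^{\cor{S}}\rtimes S_p\rtimes T_p^{\cor{S}}$ as in the discussion preceding the statement. Up to replacing $I$ by a conjugate, I would like to arrange that $I\leq S_p$; this is legitimate since every $p$-subgroup of $G$ lies in some Sylow $p$-subgroup, all Sylow $p$-subgroups are conjugate, and conjugating $I$ only conjugates the eventual $p$-evolution. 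So assume $I\leq S_p$.

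Next I would look for a subgroup $J$ of the form $J=I\rtimes T_p^{\cor{S}}$, up to replacing $I$ by a suitable conjugate inside $S_p$. The index $|G:J|=|L_p^{\cor{S}}|\cdot|S_p:I|$ is then a power of $p$, and $|J:I|=|T_p^{\cor{S}}|$ is coprime to $p$, so conditions $(ii)$ and $(iii)$ of Definition \ref{p-evolution} hold; the only thing to check is $(i)$, namely that $I$ is genuinely normalized by (a conjugate-adjusted copy of) $T_p^{\cor{S}}$ so that the product $I\rtimes T_p^{\cor{S}}$ is a subgroup. This is exactly where the intense hypothesis enters. By assumption the action map $T_p^{\cor{S}}\rightarrow\Aut(S_p)$ has image in $\Int(S_p)$. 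Since $S_p$ is a $p$-group and $T_p^{\cor{S}}$ is a $p'$-group, the image $A\leq\Aut(S_p)$ has order coprime to $|S_p|$, so Lemma \ref{equivalent intense coprime} applies to the pair $(S_p,A)$: for the subgroup $I$ of $S_p$ there exists $x\in S_p$ such that every $\alpha\in A$ fixes $xIx^{-1}$, i.e. $xIx^{-1}$ is normalized by $T_p^{\cor{S}}$. Replacing $I$ by $xIx^{-1}$ (another $S_p$-conjugate, hence harmless by the reduction above), we conclude that $J=I\rtimes T_p^{\cor{S}}$ is a subgroup of $G$ containing $I$, and it is a $p$-evolution of $I$. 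Tracking the conjugations back shows the original $p$-subgroup also has a $p$-evolution, so $G$ is evolving.

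The main obstacle is the normalization step, and it is resolved precisely by invoking Lemma \ref{equivalent intense coprime} (Glauberman's Lemma, with $S_p$ solvable since it is a $p$-group, as noted in the remark after that lemma). A secondary point to handle carefully is the bookkeeping of conjugations: one must make sure that the conjugations used to put $I$ inside $S_p$ and then to make $I$ invariant under $T_p^{\cor{S}}$ are composed correctly, so that the final $J$ is produced as a genuine conjugate-of-a-subgroup of $G$ containing the original $I$. I would also remark in passing that $I$ is automatically a Sylow $p$-subgroup of $J$, which is the form of the conclusion used later, though it is immediate from the index computation.
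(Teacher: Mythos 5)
Your overall strategy is the right one and matches the paper's: reduce to $I\leq S_p$, apply Lemma \ref{equivalent intense coprime} to conjugate $I$ inside $S_p$ until $T_p^{\cor{S}}$ normalizes it, and then assemble the $p$-evolution from the decomposition $G=L_p^{\cor{S}}\rtimes S_p\rtimes T_p^{\cor{S}}$. However, there is a concrete error in the index computation. You set $J=I\rtimes T_p^{\cor{S}}$ and assert that $|G:J|=|L_p^{\cor{S}}|\cdot|S_p:I|$ is a power of $p$. But $L_p^{\cor{S}}=\langle S_q \mid q>p\rangle$ has order a product of prime powers for primes strictly larger than $p$, so $|L_p^{\cor{S}}|$ is coprime to $p$; thus $|G:J|$ is a $p$-power only in the degenerate case where $L_p^{\cor{S}}$ is trivial, i.e.\ when $p$ is the largest prime dividing $|G|$. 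For a general $p$ your $J$ fails condition $(ii)$ of Definition \ref{p-evolution}.

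The fix, which is what the paper does, is to absorb $L_p^{\cor{S}}$ into the candidate evolution: since $L_p^{\cor{S}}$ is normal in $G$, the set $K=L_p^{\cor{S}}\rtimes (sIs^{-1})\rtimes T_p^{\cor{S}}$ is a subgroup once $T_p^{\cor{S}}$ normalizes $sIs^{-1}$. Then $|G:K|=|S_p:I|$ is a genuine $p$-power and $|K:I|=|L_p^{\cor{S}}|\cdot|T_p^{\cor{S}}|$ is coprime to $p$, and conjugating back by $s^{-1}$ produces a $p$-evolution of the original $I$. With that one correction your argument goes through and coincides with the paper's proof.
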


\begin{proof}
Let $G$ be a finite prime-intense group and let $\cor{S}=(S_q)_{q\in\pi}$ be a Sylow family of $G$.
Let moreover $p$ be a prime number and let $I$ be a $p$-subgroup of $G$, which we assume, without loss of generality, to be contained in $S_p$. The group $G$ being prime-intense, Lemma \ref{equivalent intense coprime} yields that there exists $s\in S_p$ such that $T_p^\cor{S}$ normalizes 
$sIs^{-1}$. Fix $s$ and define $K=L_p^\cor{S}\rtimes sIs^{-1}\rtimes T_p^\cor{S}$. Then $|G:K|$ is a $p$-power and
$|K|/|I|$ is not divisible by $p$. It follows that the subgroup $J=s^{-1}Ks$ is a $p$-evolution of $I$ in $G$ and so $G$ is evolving.
\end{proof}

\begin{lemma}\label{sylows}
Every evolving group has a Sylow family.
\end{lemma}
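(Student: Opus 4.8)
The plan is to prove Lemma \ref{sylows} by induction on $|G|$, producing the Sylow family one prime at a time starting from the largest prime divisor. Let $p$ be the largest prime dividing $|G|$. First I would pick any $S_p \in \Syl_p(G)$ and take a $p$-evolution $J$ of $S_p$ in $G$: then $|G:J|$ is a $p$-power and $p \nmid |J:S_p|$, but since $S_p$ is already a full Sylow $p$-subgroup of $G$, this forces $|J:S_p| = 1$, i.e. $J = S_p$ — that is not what we want. Instead the useful evolution is of the trivial subgroup or, better, we should evolve to find a $p$-complement. The right move: apply the evolving property to $I = \{1\}$ viewed as a $p$-subgroup; we get a subgroup $J \le G$ with $|G:J|$ a $p$-power and $|J|$ coprime to $p$, so $J$ is a $p$-complement (a Hall $p'$-subgroup) of $G$. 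Fix such a $J$; then $|G:J| = |S_p|$ so $G = S_p J$ for any $S_p \in \Syl_p(G)$, and by a counting argument $S_p J$ exhausts $G$.

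The subtle point is that I need $S_p$ to \emph{normalize} $J$, or rather I want the Sylow family ordered so that smaller primes normalize $p$. So I should instead argue the other way: I want $S_p$ normal? No — $p$ is the largest prime, and in the Sylow family condition it is the $S_q$ with $q < p$ that normalize $S_p$. So what I really want is that $S_p \trianglelefteq G$ is false in general, but I do want $L_p := \langle S_q : q < p\rangle$ to normalize $S_p$; equivalently, since $L_p$ will be all of a $p$-complement, I want the $p$-complement $J$ to normalize $S_p$. To get this, note $G = S_p \rtimes J$ would require $S_p$ normal, which need not hold. The correct approach: use that $G$ is evolving, hence (by Proposition \ref{supersolvable}, stated above) supersolvable; a supersolvable group has a normal Sylow subgroup for its largest prime divisor. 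So $S_p \trianglelefteq G$, and then $G = S_p \rtimes J$ where $J$ is any $p$-complement. Now $J$ is a normal subgroup? No, but $J$ is evolving by Lemma \ref{property preserved} (it is a quotient, $J \cong G/S_p$, and also isomorphic to a subgroup; use the quotient). Wait — $J$ need not be normal, but $G/S_p \cong J$ as groups via the projection restricted to a complement, and $G/S_p$ is evolving by Lemma \ref{property preserved}. So $J$ is evolving and $|J| < |G|$.

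By the inductive hypothesis $J$ has a Sylow family $(R_q)_{q \in \pi, q < p}$. I claim $(R_q)_{q<p}$ together with $S_p$ is a Sylow family of $G$: indeed each $R_q \in \Syl_q(J) = \Syl_q(G)$ since $|G:J|$ is a $p$-power, and $S_p \in \Syl_p(G)$; the condition "$S_{q'}$ normalizes $S_{q''}$ for $q' < q''$" holds among the $R_q$ by hypothesis, and for the pair $(R_q, S_p)$ with $q < p$ it holds because $S_p \trianglelefteq G$, so $R_q \le G$ normalizes $S_p$ automatically. This completes the induction.

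The main obstacle is the very first reduction step: I must establish that the largest prime Sylow subgroup $S_p$ is normal in $G$. The cleanest route is to invoke Proposition \ref{supersolvable} (evolving groups are supersolvable, stated above), from which normality of the top Sylow is a standard fact about supersolvable groups. If for logical/ordering reasons one wants to avoid citing supersolvability here, the alternative is to prove directly that $G$ has a normal $p$-complement $J$: take a $p$-evolution $J$ of the trivial group, giving a Hall $p'$-subgroup; then one must argue $J$ is normal, which would follow if all $p$-complements are conjugate (Schur--Zassenhaus, since $S_p$ is nilpotent, hence solvable) \emph{and} some further argument forces normality — but in fact a $p$-complement for the \emph{largest} prime $p$ need not be normal in general groups, so this direct route genuinely needs the structural input that $G$ is evolving. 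Thus I expect to lean on Proposition \ref{supersolvable}, and the bulk of the write-up is the routine verification that splicing $S_p$ onto the inductively obtained Sylow family of $J$ yields a valid Sylow family of $G$.
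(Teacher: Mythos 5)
Your inductive skeleton — once $S_p$ is normal for $p$ the largest prime dividing $|G|$, pass to a complement $T_p \cong G/S_p$, conclude $T_p$ is evolving via Lemma \ref{property preserved} applied to the quotient, and splice a Sylow family of $T_p$ together with $S_p$ — matches the second half of the paper's proof exactly, and your verification of the splicing is correct. The gap is in the first half, and you have correctly identified where it lies: you must establish $S_p \trianglelefteq G$. Your primary route is to cite Proposition \ref{supersolvable}, but that is circular. In the paper the proof of Proposition \ref{supersolvable} relies on Proposition \ref{ev to corr}, whose proof begins by invoking Lemma \ref{sylows} itself; the logical order is Lemma \ref{sylows} $\Rightarrow$ Proposition \ref{ev to corr} $\Rightarrow$ Proposition \ref{supersolvable}, so you cannot use the last to prove the first. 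Your fallback (show that a Hall $p'$-subgroup obtained as a $p$-evolution of the trivial group is normal) is, as you acknowledge yourself, not completed, and it does not go through without a further idea.

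The paper's actual argument for normality of $S_p$ sidesteps the issue by working from the \emph{smallest} prime $r$ dividing $|G|$ rather than the largest. Take $R \in \Syl_r(G)$, choose $T \le R$ of index $r$, and let $J$ be an $r$-evolution of $T$ in $G$. Comparing $r$-parts in $|G:T| = |G:J|\,|J:T|$ shows $|G:J| = r$; since $r$ is the smallest prime dividing $|G|$, the subgroup $J$ is automatically normal, hence evolving by Lemma \ref{property preserved}. The induction hypothesis gives $J$ a Sylow family; since $p$ remains the largest prime dividing $|J|$, the Sylow $p$-subgroup $P$ of $J$ coming from that family is normalized by every member of the family, hence normal and therefore characteristic in $J$. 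As $J \trianglelefteq G$ and $r \ne p$, this makes $P$ the unique Sylow $p$-subgroup of $G$, which is the normality your argument needs. From there your splicing step proceeds as written.
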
 

\begin{proof}
Let $G$ be an evolving group. We will work by induction on the order of $G$. 
Assume without loss of generality that $G$ is non-trivial and let $p$ and $r$ be respectively the largest and smallest prime dividing $|G|$. We claim that $G$ has a unique Sylow $p$-subgroup.
If $p=r$, we are clearly done so we assume $p>r$.
Let $R$ be a Sylow $r\text{-subgroup}$ of $G$ and let $T$ be a subgroup of index $r$ in $R$. The group $G$ is evolving and so  
there is an $r$-evolution $J$ of $T$ in $G$, which has then index $r$ in $G$. 
The prime $r$ being the smallest prime dividing the order of $G$, the subgroup $J$ is normal and hence, by Lemma \ref{property preserved}, evolving. 
By the induction hypothesis, there is a unique Sylow $p$-subgroup $P$ in $J$, which is also the unique $p$-Sylow of $G$.
This proves the claim.
Let now $S_p$ be the unique Sylow $p$-subgroup of $G$ and let $T_p$ be a complement of it in $G$, as given by the Schur-Zassenhaus theorem. Lemma \ref{property preserved} yields that $T_p$ is evolving and so, thanks to the induction hypothesis, $T_p$ has a Sylow family $(S_q)_{q\in\pi\setminus\{p\}}$. As a consequence, the family $(S_q)_{q\in\pi}$ is a Sylow family of $G$. 
\end{proof}

\begin{proposition}\label{ev to corr}
Every evolving group is prime-intense.
\end{proposition}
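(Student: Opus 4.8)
The plan is to induct on $|G|$, using Lemma \ref{sylows} to fix a Sylow family $\cor{S}=(S_q)_{q\in\pi}$ of the evolving group $G$ and then to show that, for each prime $p\in\pi$, the action map $T_p^{\cor{S}}\to\Aut(S_p)$ lands in $\Int(S_p)$. Write $p$ for the largest prime in $\pi$ and $r$ for the smallest; as in Lemma \ref{sylows}, $S_p$ is normal in $G$ and $G=S_p\rtimes T_p^{\cor{S}}$ with $T_p^{\cor{S}}$ evolving of smaller order. For the primes $q\neq p$ one would like to invoke the inductive hypothesis applied to $T_p^{\cor{S}}$: there the Sylow family $(S_q)_{q<p}$ is prime-intense, so the action of $T_q^{\cor{S}}$ on $S_q$ is by intense automorphisms of $S_q$ \emph{as a subgroup of $T_p^{\cor{S}}$}. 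Since $S_q$ is genuinely a Sylow $q$-subgroup of $G$ and $T_q^{\cor{S}}$ does not change, this immediately transfers to $G$. So the real content is the top prime $p$: I must show the conjugation action of $T_p^{\cor{S}}$ on the normal Sylow subgroup $S_p$ consists of intense automorphisms.

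To handle $p$, fix a subgroup $H\leq S_p$; by Lemma \ref{equivalent intense coprime} (with $G=S_p$, which is a $p$-group hence solvable, and $A=$ image of $T_p^{\cor{S}}$ in $\Aut(S_p)$, of order coprime to $|S_p|$) it suffices to find $s\in S_p$ with $T_p^{\cor{S}}$ normalizing $sHs^{-1}$. Here is where evolvingness enters directly: $H$ is a $p$-subgroup of $G$, so it has a $p$-evolution $J$ in $G$, i.e. $H\leq J$, $|G:J|$ a $p$-power, and $p\nmid|J:H|$. Then $H\in\Syl_p(J)$, and in fact $H$ is normal in $J$: indeed $|J|=|H|\cdot m$ with $m=|J:H|$ coprime to $p$, and $H=J\cap S_p'$ for a suitable Sylow $p$-subgroup $S_p'$ of $G$ (one conjugate of $S_p$ containing $H$), so $H=O_p(J)$ is characteristic in $J$. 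Replacing $\cor{S}$ by a conjugate — equivalently, after conjugating $H$ inside $S_p$, which does not affect what we must prove — we may assume $S_p'=S_p$, so $H\leq S_p$ and $H\trianglelefteq J$. Now $J$ has order $|H|\cdot m$ with $m$ coprime to $p$ and $|G:J|$ equal to $|S_p:H|$; intersecting a suitable conjugate of the complement, or applying Schur--Zassenhaus to $J$, produces a $p'$-complement $C\leq J$ of $H$ with $|C|=m$. The point is that $C$ should be arranged to lie inside $T_p^{\cor{S}}$ up to conjugacy, and then $C$ normalizes $H$; pushing this up, the full group $\langle S_q:q<p\rangle=T_p^{\cor{S}}$ — which contains a conjugate of every $p'$-subgroup, by order considerations inside the normal-complement structure $G=S_p\rtimes T_p^{\cor{S}}$ — can be brought (after a final conjugation by an element of $S_p$, legitimate by coprimeness and Schur--Zassenhaus applied to $HT_p^{\cor{S}}$) to normalize $H$. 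This yields exactly condition (ii) of Lemma \ref{equivalent intense coprime}, hence the action of $T_p^{\cor{S}}$ on $S_p$ is intense.

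The main obstacle is the bookkeeping in the last step: extracting from a single $p$-evolution $J$ of $H$ a complement that can be \emph{simultaneously} conjugated into $T_p^{\cor{S}}$ and made to normalize $H$, i.e. converting the ``there exists $J$'' from the definition of evolving into the ``$T_p^{\cor{S}}$ (after an $S_p$-conjugation of $H$) normalizes $H$'' needed by Glauberman. The clean way to manage this is to work inside the subgroup $HT_p^{\cor{S}}\leq G$: it has order $|H|\cdot|T_p^{\cor{S}}|$, it contains the $p$-evolution data in the form of a $p'$-complement $C$ of $H$ in $J$ together with the ambient $T_p^{\cor{S}}$, both of which are $p'$-Hall subgroups of $HT_p^{\cor{S}}$, hence $H$-conjugate by Schur--Zassenhaus; the $H$-conjugation moving $C$ to $T_p^{\cor{S}}$ then shows $T_p^{\cor{S}}$ normalizes a conjugate of $H$, and conjugating this conjugacy relation back gives $T_p^{\cor{S}}\subseteq\nor_G(sHs^{-1})$ for some $s\in S_p$. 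Everything else — transfer of the inductive hypothesis to the smaller primes, and the reduction to Lemma \ref{equivalent intense coprime} — is formal. Combining the two ranges of primes gives that $\cor{S}$ witnesses prime-intensity of $G$.
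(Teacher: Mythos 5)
Your argument for the top prime $p$ is essentially the paper's: take a $p$-evolution $J$ of $H\leq S_p$, note that $H=J\cap S_p$ is normal in $J$ because $S_p\trianglelefteq G$ (the $O_p(J)$ phrasing is circular as written — you cannot identify $H$ with $O_p(J)$ before knowing $H\trianglelefteq J$, but the normality is immediate from $S_p\trianglelefteq G$), extract a Schur--Zassenhaus complement $C$ of $H$ in $J$, observe that $|C|=|T_p^{\cor S}|$ and $C\cap S_p\leq J\cap S_p=H$ together with $C\cap H=1$ force $C$ to be a complement of $S_p$ in $G$, hence $S_p$-conjugate to $T_p^{\cor S}$, and finish via Lemma \ref{equivalent intense coprime}. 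That is the paper's proof. For the remaining primes the paper avoids induction altogether: it quotients by $L_q^{\cor S}$ and re-runs the top-prime argument in the evolving quotient $S_q\rtimes T_q^{\cor S}$, where $q$ is now the largest prime. Your inductive version also works, but you should justify the transfer a bit more carefully: the inductive hypothesis only gives that \emph{some} Sylow family of $T_p^{\cor S}$ witnesses prime-intensity, and it is the conjugacy of Sylow families (the lemma preceding Lemma \ref{sylows}) that lets you conclude for the specific family $(S_q)_{q<p}$. The one genuinely problematic piece is the ``clean way'' in your last paragraph: $HT_p^{\cor S}$ need not be a subgroup of $G$ unless $T_p^{\cor S}$ already normalizes $H$, and even if it were, there is no reason for $C\leq J$ to be contained in it. Fortunately your earlier direct argument — applying the conjugacy part of Schur--Zassenhaus to the two complements $C$ and $T_p^{\cor S}$ of $S_p$ in $G$ itself — already closes the proof, so this detour can simply be dropped.
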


\begin{proof}
Let $G$ be an evolving group and let $\cor{S}=(S_q)_{q\in\pi}$ be a Sylow family of $G$, as given by Lemma \ref{sylows}. Assume without loss of generality that $G$ is non-trivial, let $p$ be the largest prime dividing 
$|G|$ and write $G=S_p\rtimes T_p^\cor{S}$.
Let moreover $I$ be a subgroup of $S_p$ and let $J$ be a $p$-evolution of $I$ in $G$. We will exibit an element $s\in S_p$ such that $T_p^\cor{S}$ is contained in $\nor_G(sIs^{-1})$. 
By comparing orders, we see that $G=S_pJ$ and that $S_p\cap J=I$. As a consequence, $I$ is normal in $J$ and so, by the Schur-Zassenhaus theorem, $I$ has a complement $T$ in $J$, which is also a complement of $S_p$ in $G$. 
Moreover, there is an element $s\in S_p$ such that $T_p^\cor{S}=sTs^{-1}$ and hence $sIs^{-1}$ is normalized by $T_p^\cor{S}$. 
Let now $q$ be any prime number dividing the order of $G$ and write $G=L_q^\cor{S}\rtimes S_q\rtimes T_q^\cor{S}$. The subgroup $L_q^\cor{S}$ being normal in $G$, Lemma \ref{property preserved} ensures that $S_q\rtimes T_q^\cor{S}$ is evolving. 
Now $q$ is the largest prime which divides the order of $S_q\rtimes T_q^\cor{S}$ and so, for every subgroup $I$ of $S_q$,
there exists $s\in S_q$ such that $T_q^\cor{S}$ normalizes $sIs^{-1}$. We are done thanks to Lemma \ref{equivalent intense coprime}.
\end{proof}

\noindent
Theorem \ref{behaviour} follows from Propositions \ref{behaviour:2to1} and \ref{ev to corr}.
\vspace{8pt}\\
\noindent
We conclude the present section by giving the proof of Proposition \ref{supersolvable}. To this end, let $G$ be an evolving group, which is prime-intense by Proposition \ref{ev to corr}, and let $\cor{S}=(S_q)_{q\in\pi}$ be a Sylow family of $G$.
To prove that $G$ is supersolvable, we will work by induction on its order. 
If $G$ is the trivial group, then $G$ is supersolvable, so we assume that $G$ is non-trivial and we let $p$ be the largest prime 
dividing $|G|$. Then $S_p$ is normal in $G$ and $T_p^\cor{S}$ is prime-intense. It follows from the induction hypothesis that $T_p^\cor{S}$ is supersolvable.
Let now the normal series $1=I_0\leq I_1\leq\dots\leq I_r=S_p$ be such that, for every $j\in\{0,\dots,r-1\}$, one has $|I_{j+1}:I_j|=p$ and fix $j\in\{0,\dots,r\}$. Thanks to Lemma \ref{equivalent intense coprime}, there exists $s\in S_p$
such that $T_p^{\cor{S}}$ normalizes $sI_js^{-1}=I_j$. Since
$G=S_p\rtimes T_p^{\cor{S}}$, the subgroup $I_j$ is normal in $G$ and thus $G$ is supersolvable.

\section{The proof of Theorem \ref{evolvingintense}}\label{sec-structure}

\noindent
In the present section we prove Theorem \ref{evolvingintense} by building a graph incorporating some structural properties of evolving groups, which we have proven to be prime-intense in Section \ref{section primeintense}.
\vspace{8pt} \\ 
\noindent
Let $G$ be an evolving group and let $\pi$ be the collection of prime numbers dividing the order of $G$. Let moreover $\cor{S}=(S_p)_{p\in\pi}$ be a Sylow family of $G$. Then Lemmas \ref{intense properties} and \ref{intense vector space} yield, for each $p\in\pi$, a sequence of maps
\[
T_p^{\cor{S}}\rightarrow \Int(S_p) \rightarrow\Int(S_p/\Phi(S_p))\rightarrow \F_p^*,
\]
whose composition $T_p^{\cor{S}}\rightarrow\F_p^*$ we denote by $\lambda_p$.
We define the graph $\Gamma=(V,E)$ associated to $G$ to be the directed graph with
$$V=\pi \ \ \ \text{and}\ \ \ E=\graffe{(q,p) : p,q\in V,\ q<p,\ \lambda_p(S_q)\neq 1}.$$
The source and target maps $s,t:E\rightarrow V$ are respectively defined, for each $(q,p)\in E$, by 
$s(q,p)=q$ and by $t(q,p)=p$. We define additionally 
\[\pi_s=s(E),\ \pi_t=t(E) \ \ \text{and} \ \ \pi_0=\pi\setminus(\pi_s\cup\pi_t).
\] 
Note that the the graph associated to $G$ does not depend on the choice of $\cor{S}$. In particular, the sets $\pi_s$, $\pi_t$, and $\pi_0$ are uniquely determined by $G$ and they form a partition of $\pi$, as a consequence of the following result.

\begin{proposition}\label{no consecutive edges}
Let $G$ be an evolving group and let $\Gamma=(V,E)$ be the graph associated to it. 
Then $\pi_s\cap\pi_t=\emptyset$, i.e. there are no consecutive edges in $\Gamma$. 
\end{proposition}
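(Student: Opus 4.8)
The plan is to argue by contradiction. Suppose $r\in\pi_s\cap\pi_t$; fixing once and for all the Sylow family $\cor{S}$ used in the construction of $\Gamma$, this means there are primes $q<r<p$ in $\pi$ with $(q,r),(r,p)\in E$, i.e.\ with $\lambda_r(S_q)\neq 1$ and $\lambda_p(S_r)\neq 1$. The mechanism I want to exploit is that the incoming edge $(q,r)$ forces the conjugation action of $S_q$ to exhaust $S_r$ modulo $\Phi(S_r)$, while the outgoing edge $(r,p)$ forces $\lambda_p$ to restrict to a nontrivial homomorphism $S_r\to\F_p^*$; since $\F_p^*$ is abelian, these two requirements collide.

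First I would translate $\lambda_r(S_q)\neq 1$ into the equality $[S_r,S_q]\,\Phi(S_r)=S_r$. By construction $\lambda_r$ is the composite $T_r^{\cor{S}}\to\Int(S_r)\to\Int(S_r/\Phi(S_r))\xrightarrow{\sim}\F_r^*$, where the last map is the isomorphism of Lemma \ref{intense vector space} (legitimate since $S_r\neq 1$, hence $S_r/\Phi(S_r)\neq 0$). Thus $\lambda_r(S_q)\neq 1$ produces an $x\in S_q$ acting on the $\F_r$-vector space $\overline{S_r}:=S_r/\Phi(S_r)$ as multiplication by some $\zeta\in\F_r^*$ with $\zeta\neq 1$. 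For each $s\in S_r$ the commutator $[x,s]=xsx^{-1}s^{-1}$ lies in $S_r$ (as $S_q$ normalizes $S_r$) and its image in $\overline{S_r}$ equals $(\zeta-1)\overline{s}$; since $\zeta-1\in\F_r^*$ these images fill up $\overline{S_r}$, so $[S_r,S_q]$ surjects onto $\overline{S_r}$ and therefore $[S_r,S_q]\,\Phi(S_r)=S_r$.

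Next, because $q,r<p$ we have $S_q,S_r\subseteq T_p^{\cor{S}}$, so $\lambda_p$ is defined on both; taking values in the abelian group $\F_p^*$, it kills every commutator $[x,s]$ with $x\in S_q$ and $s\in S_r$, hence $\lambda_p([S_r,S_q])=1$. Combining this with the previous step, and using $\Phi(S_r)=[S_r,S_r]\,S_r^{\,r}$ together with the commutativity of $\F_p^*$, I get
\[
\lambda_p(S_r)=\lambda_p\big([S_r,S_q]\,\Phi(S_r)\big)=\lambda_p(\Phi(S_r))=\lambda_p(S_r^{\,r}),
\]
and $\lambda_p(S_r^{\,r})$ is exactly the subgroup of $r$-th powers of the cyclic $r$-group $\lambda_p(S_r)\leq\F_p^*$. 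A nontrivial cyclic $r$-group never equals its subgroup of $r$-th powers, so $\lambda_p(S_r)=1$, contradicting $(r,p)\in E$. This contradiction proves $\pi_s\cap\pi_t=\emptyset$.

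The step I expect to be the crux is isolating the two facts above: that an incoming edge at $r$ yields $[S_r,S_q]\Phi(S_r)=S_r$, and that an outgoing edge at $r$ demands a nontrivial homomorphism $S_r\to\F_p^*$. Once both are on the table, the contradiction is a one-line Frattini computation needing nothing more than the Burnside basis theorem and the fact that $\F_p^*$ is abelian. The only bookkeeping point is to fix a single Sylow family at the outset — harmless, since $\Gamma$ does not depend on the choice — so that the relations ``$S_q$ normalizes $S_r$'' and ``$S_q,S_r\leq T_p^{\cor{S}}$'' are simultaneously in force.
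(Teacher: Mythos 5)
Your proof is correct and rests on the same two ingredients as the paper's argument: the scalar description of (restrictions of) intense automorphisms on the Frattini quotient of the middle Sylow subgroup (Lemma \ref{intense vector space}), and the fact that $\lambda_p$ kills $[S_q,S_r]$ because $\F_p^*$ is abelian. The difference is the direction in which the commutator is bounded. The paper fixes the outgoing edge $(q,p)$ with $r<q<p$ and shows $[S_r,S_q]$ lands inside the proper subgroup $(\ker\lambda_p\cap S_q)\Phi(S_q)$, from which the scalar by which $S_r$ acts on $S_q/\Phi(S_q)$ must be $1$, i.e.\ $\lambda_q(S_r)=1$. You instead use the assumed incoming edge $(q,r)$ to force $[S_r,S_q]\Phi(S_r)=S_r$ (bounding the commutator from below), and then the identity $\lambda_p(S_r)=\lambda_p(\Phi(S_r))=\lambda_p(S_r)^r$ collapses $\lambda_p(S_r)$ to $1$. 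Both routes are valid and of comparable length; yours is slightly more computational at the finish, and has the minor advantage of not needing to observe that adjoining $\Phi(S_q)$ to a proper subgroup keeps it proper (the Burnside basis theorem step the paper uses implicitly).
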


\begin{proof}
Let $p,q,r\in V$ be such that $p>q>r$ and let $(q,p)$ be an element of $E$. We will show that $\lambda_q(S_r)$ is trivial. Since $(q,p)\in E$, the subgroup 
$\lambda_p(S_q\rtimes S_r)$ is non-trivial: we choose $g\in S_r$ and we choose $s\in S_q$ such that $s$ does not map to the identity in $\F_p^*$. Since $\F_p^*$ is abelian, the element $[s,g]$ maps to $1$ and so
$gsg^{-1}\equiv s\bmod(\ker\lambda_p\cap S_q)$.
The group $\lambda_p(S_q)$ being non-trivial, $\ker\lambda_p\cap S_q$ is a proper subgroup of $S_q$ and therefore so is 
the subgroup $L=(\ker\lambda_r\cap S_q)\Phi(S_q)$. Since $gsg^{-1}\equiv s\bmod L$, the group $S_q/L$ is a non-zero quotient space of $S_q/\Phi(S_q)$ on which 
the action of $S_r$ is trivial. Lemma \ref{intense vector space} yields the triviality of $\lambda_q(S_r)$.
\end{proof}


\begin{theorem}\label{structure}
Let $G$ be an evolving group and let $(S_p)_{p\in\pi}$ be a Sylow family of $G$. 
Then one has
\[G=\bigg(\prod_{p\in\pi_t}S_p\rtimes\prod_{p\in\pi_s}S_p\bigg)\times\prod_{p\in\pi_0}S_p.\]
\end{theorem}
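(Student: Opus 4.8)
The plan is to exploit Proposition~\ref{no consecutive edges} together with the structure of the graph $\Gamma$, building up the claimed internal decomposition one block of primes at a time. Throughout, I will use the standing fact (noted just before Proposition~\ref{no consecutive edges}) that for any subset $\bar\pi\subseteq\pi$ the subgroup $\langle S_q\mid q\in\bar\pi\rangle$ has order $\prod_{q\in\bar\pi}|S_q|$, so that $G$ is set-theoretically the product of the $S_q$ in any order consistent with the Sylow family. The whole game is therefore to show that certain of these products are in fact \emph{subgroups}, that certain of them are \emph{normal}, and that the indicated semidirect and direct products are internal.

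\textbf{Step 1: the primes in $\pi_0$ contribute a direct factor.} Fix $p\in\pi_0$. By definition of $\Gamma$, there is no edge into $p$ and no edge out of $p$; that is, $\lambda_p(S_q)=1$ for every $q<p$ and $\lambda_q(S_p)=1$ for every $q>p$. I would first argue that $S_p$ is normal in $G$: for $q<p$ the group $S_q$ normalizes $S_p$ (Sylow family) and acts on $S_p$ through $\Int(S_p)$; since the coprime action $T_p^{\cor{S}}\to\Int(S_p)$ factors, via $\Int(S_p/\Phi(S_p))\to\F_p^*$, through $\lambda_p$, and $\lambda_p$ kills each such $S_q$, Lemma~\ref{equivalent intense coprime} (applied after passing to $S_p/\Phi(S_p)$, as in the proof of Proposition~\ref{no consecutive edges}) forces the action of $S_q$ on $S_p$ to be trivial — hence $S_q$ centralizes $S_p$. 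Here I will need the auxiliary lemma that a coprime automorphism group of a $p$-group acting trivially on the Frattini quotient acts trivially; this is standard (Burnside), and I expect it is available from the cited group-theory references. Symmetrically, for $q>p$ the fact that $\lambda_q(S_p)=1$ together with the same Frattini argument shows $S_p$ acts trivially on $S_q$, i.e. $S_p$ centralizes each $S_q$ with $q>p$. Thus $S_p$ is centralized by every other Sylow subgroup in the family, so $S_p$ is central-ish in the sense that $\prod_{p\in\pi_0}S_p$ is a normal (indeed central-acting) subgroup commuting with $\prod_{p\notin\pi_0}S_p$; by order count the product is direct, giving the outermost $\times\prod_{p\in\pi_0}S_p$.

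\textbf{Step 2: the remaining primes split into sources and targets with no interaction downward from targets.} Set $H=\langle S_p\mid p\in\pi_s\cup\pi_t\rangle$; by Step~1 this is a subgroup and $G=H\times\prod_{p\in\pi_0}S_p$, so it suffices to prove $H=\big(\prod_{p\in\pi_t}S_p\big)\rtimes\big(\prod_{p\in\pi_s}S_p\big)$. I claim $N:=\langle S_p\mid p\in\pi_t\rangle$ is a normal subgroup of $H$ and $U:=\langle S_p\mid p\in\pi_s\rangle$ a subgroup, with $N\cap U=1$ and $NU=H$ forced by orders; the only real content is normality of $N$. To see $N$ is a subgroup: among primes in $\pi_t$ there are no edges (Proposition~\ref{no consecutive edges} says $\pi_s\cap\pi_t=\emptyset$, but I also need that a target prime is never a source, which is exactly that statement), so for $p,p'\in\pi_t$ with $p'<p$ we have $\lambda_p(S_{p'})=1$ and hence, by the Frattini argument of Step~1, $S_{p'}$ centralizes $S_p$; thus the $S_p$, $p\in\pi_t$, pairwise commute and $N\cong\prod_{p\in\pi_t}S_p$ is an (internal) direct product and in particular a subgroup. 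For normality of $N$ in $H$ I must show each $S_q$ with $q\in\pi_s$ normalizes $N$: such $S_q$ normalizes $S_p$ whenever $q<p$ (Sylow family), and since a source prime is not a target prime, there is no edge into $q$, so for $p'\in\pi_t$ with $p'<q$ we again get $\lambda_q(S_{p'})=1$, i.e. $S_{p'}$ centralizes $S_q$, so $S_q$ normalizes $S_{p'}$ as well. Hence every $S_q$ ($q\in\pi_s$) normalizes every $S_p$ ($p\in\pi_t$), so it normalizes $N$; therefore $N\trianglelefteq H$, $U$ is a complement by order count, and $H=N\rtimes U$, which combined with Step~1 yields the stated formula.

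\textbf{Main obstacle.} The substantive point — and the one I would be most careful about — is the repeated passage ``$\lambda_{\bullet}(S_{\bullet})=1\Rightarrow$ the corresponding Sylow subgroups commute.'' This is not immediate from the definition of $\Gamma$: $\lambda_p$ only records the action on the \emph{Frattini quotient} of $S_p$, composed down to $\F_p^*$. The implication requires two genuine inputs: first, Lemma~\ref{intense vector space}, to know that an intense action on $S_p/\Phi(S_p)$ that lands in $\Int$ is \emph{exactly} scalar multiplication, so triviality of $\lambda_p$ means trivial action on $S_p/\Phi(S_p)$; and second, the coprimality of $|T_p^{\cor{S}}|$ with $p$, to upgrade ``trivial on the Frattini quotient'' to ``trivial on $S_p$'' (a $p'$-group acting on a $p$-group trivially modulo the Frattini subgroup acts trivially). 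I would isolate this as a short lemma at the start of the proof and invoke it three times. Everything else is bookkeeping with orders and with which Sylow subgroups normalize which, all of it underwritten by Proposition~\ref{no consecutive edges} guaranteeing the partition $\pi=\pi_s\sqcup\pi_t\sqcup\pi_0$ behaves as the notation suggests.
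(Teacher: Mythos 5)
Your proposal is correct and follows essentially the same route as the paper: the single substantive ingredient — that $\lambda_p(S_q)=1$ forces $S_q$ to centralize $S_p$ — is obtained, as in the paper, from Lemma~\ref{intense vector space} together with Burnside's theorem that the kernel of $\Aut(S_p)\to\Aut(S_p/\Phi(S_p))$ is a $p$-group (coprimality then killing the image of $S_q$ in $\Int(S_p)$), after which Proposition~\ref{no consecutive edges} handles the bookkeeping exactly as in your Steps 1 and 2. The only blemish is the stray appeal in Step 1 to Lemma~\ref{equivalent intense coprime}, which is not what is doing the work there — that lemma produces a fixed conjugate, not triviality of the action — but you immediately supply the correct tool (the Burnside/Frattini fact) in the next sentence, so the argument goes through.
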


\begin{proof}
Let $\Gamma=(V,E)$ be the graph associated to $G$.
We recall that, if $p$ is a prime number and $S$ is a finite $p$-group, then the kernel of the natural map $\Aut(S)\rightarrow\Aut(S/\Phi(S))$ is a $p$-group. It follows that, for a pair $(q,p)\in V^2$ with $q<p$, if $\lambda_p(S_q)$ is trivial, then so is the image of $S_q$ in $\Int(S_p)$. Lemma \ref{no consecutive edges} now yields that, if $\pi_*\in\graffe{\pi_s,\pi_t,\pi_0}$, then any two elements of $(S_q)_{q\in\pi_*}$ centralize each other and that, for $(q,p)\in\pi_0\times\pi$, one has $[S_q,S_p]=\graffe{1}$. 
\end{proof}

\noindent
We close the current section by giving the proof of Theorem \ref{evolvingintense}. The implication $(i)\Rightarrow(ii)$ follows directly from Theorem \ref{structure}, so we prove the other. Let $p$ and $q$ be respectively prime divisors of $|N|$ and $|T|$ and let 
$(S_p,S_q)$ be an element of $\Syl_p(N)\times\Syl_q(T)$. Then, whenever $p<q$, the sequence
$S_q\rightarrow\Int(S_p) \rightarrow\Int(S_p/\Phi(S_p))\rightarrow \F_p^*$
that is derived from Lemmas \ref{intense properties} and \ref{intense vector space} has trivial image. Since the kernel of the map 
$\Int(S_p) \rightarrow\Int(S_p/\Phi(S_p))$ is a $p$-group, it follows that, if the action of $S_q$ on $S_p$ is non-trivial, then $p>q$. We have proven that $G$ is prime-intense and so, by Theorem \ref{behaviour}, evolving.



\end{document}